\newtheorem{thm}{Theorem}[section]
\newtheorem{prop}[thm]{Proposition}
\newtheorem{conj}[thm]{Conjecture}
\newtheorem{lem}[thm]{Lemma}
\newtheorem{defi}[thm]{Definition}
\newtheorem{question}[thm]{Question}
\newtheorem{claim}[thm]{Claim}
\newtheorem{corr}[thm]{Corollary}
\theoremstyle{remark}
\newtheorem*{remark}{Remark}
\begin{document}

\title{On $k$-neighbor separated permutations}

\makeatother
\author{István Kovács}
\address{Department of Control Engineering and Information Technology \\ Budapest University of Technology and Economics}
\email[István Kovács]{kovika91@gmail.com}\thanks{\noindent The research of the first author was
supported by National Research, Development and Innovation Office NKFIH, K-111827.}

\author{Daniel Soltész}
\address{Alfréd Rényi Institute of Mathematics, Hungarian Academy of Sciences}
\email[Daniel Soltész]{solteszd@math.bme.hu}
\thanks{\noindent The research of the second author was
supported by the Hungarian Foundation for Scientific Research Grant (OTKA) No. 108947 and by the National Research, Development and Innovation Office NKFIH,  No. K-120706.}

%\keywords{{\em Keywords}: Hamiltonian path, triangle, union.}

\begin{abstract}

Two permutations of $[n]=\{1,2 \ldots n\}$ are \textit{$k$-neighbor separated} if there are two elements that are neighbors in one of the permutations and that are separated by exactly $k-2$ other elements in the other permutation. Let the maximal number of pairwise $k$-neighbor separated permutations of $[n]$ be denoted by $P(n,k)$. In a previous paper, the authors have determined $P(n,3)$ for every $n$, answering a question of Körner, Messuti and Simonyi affirmatively. In this paper we prove that for every fixed positive integer $\ell $, $$P(n,2^\ell+1) = 2^{n-o(n)}. $$
We conjecture that for every fixed even $k$, $P(n,k)=2^{n-o(n)}$. We also show that this conjecture is asymptotically true in the following sense $$\lim_{k \rightarrow \infty} \lim_{n \rightarrow \infty} \sqrt[n]{P(n,k)}=2.$$
Finally, we show that for even $n$, $P(n,n)= 3n/2$.

\end{abstract}

\maketitle
\section{Introduction}
There are numerous results concerning the maximum size of a family of permutations pairwise satisfying some prescribed relation, see \cites{reversefree,cibulka,frankldeza,tintersect,intsurvey,colliding,kms,additive}. There is a natural correspondence between permutations of $n$ elements and Hamiltonian paths in the complete graph $K_n$. Let $G_1$ and $G_2$ be two graphs on the same vertex set, we say that $G_3$ is their union if $V(G_3)=V(G_1)=V(G_2)$ and $E(G_3)= E(G_1) \cup E(G_2).$ Körner, Messuti and Simonyi made the following observation.

\begin{prop}[\cite{komesi}]  \label{prop}
The maximal number of Hamiltonian paths in the complete graph such that every pairwise union contains an odd cycle is equal to the number of balanced bipartitions of the vertex set.  That is, on $2n+1$ vertices their maximal number is $\binom{2n+1}{n}$ and on $2n$ vertices it is $\frac{1}{2}\binom{2n}{n}=\binom{2n-1}{n}$.
\end{prop}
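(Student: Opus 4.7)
The plan is to associate to each Hamiltonian path its unique vertex bipartition and to show that two paths have a bipartite union (equivalently, no odd cycle in the union) precisely when they induce the same bipartition; the proposition then reduces to counting balanced bipartitions of $V(K_n)$. The underlying fact is that a connected bipartite graph has a unique $2$-colouring up to swapping the colour classes, so a Hamiltonian path determines an unordered bipartition $(A,B)$ of $V(K_n)$, which moreover is balanced since the path alternates between $A$ and $B$: on $2n+1$ vertices the classes have sizes $n$ and $n+1$, and on $2n$ vertices both have size $n$.

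For the upper bound, suppose a family $\mathcal F$ of Hamiltonian paths has the property that every pairwise union contains an odd cycle. If two paths in $\mathcal F$ induced the same bipartition $(A,B)$, then every edge in their union would go between $A$ and $B$, making the union bipartite, a contradiction. Hence the paths in $\mathcal F$ yield pairwise distinct balanced bipartitions, and $|\mathcal F|$ is at most the number of balanced bipartitions, which is $\binom{2n+1}{n}$ when $|V|=2n+1$ (choose the smaller class) and $\tfrac{1}{2}\binom{2n}{n}=\binom{2n-1}{n}$ when $|V|=2n$ (unordered balanced pair).

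For the matching lower bound, every balanced bipartition $(A,B)$ is induced by some Hamiltonian path: fix arbitrary orderings of $A$ and of $B$ and interleave them. Choosing one such path per balanced bipartition yields a family of the claimed size, and by the uniqueness of the $2$-colouring of a connected bipartite graph, two of these paths with distinct bipartitions cannot jointly lie in any bipartite graph, so their union is non-bipartite and therefore contains an odd cycle. The only point requiring any real care is this uniqueness statement for $2$-colourings of connected bipartite graphs; once it is in hand, the proposition follows by a direct count of balanced set partitions.
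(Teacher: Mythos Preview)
Your argument is correct and matches the paper's own proof essentially line for line: both prove the upper bound by noting that two Hamiltonian paths with the same balanced bipartition have a bipartite union, and both obtain the lower bound by selecting one Hamiltonian path per balanced bipartition. Your write-up is slightly more explicit about why paths with distinct bipartitions must have a non-bipartite union (via uniqueness of the $2$-colouring of a connected bipartite graph), but this is exactly the mechanism implicit in the paper's one-line justification.
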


The upper bound follows by observing that a Hamiltonian path is a bipartite graph with a balanced bipartition. The union of two paths with the same bipartition is a bipartite graph which clearly cannot contain any odd cycle. On the other hand, if we choose a unique Hamiltonian path for every balanced bipartition, the resulting family satisfies our condition. In \cite{komesi} the authors asked whether the answer remains the same if we ask for a triangle instead of an odd cycle. This question was answered affirmatively.

\begin{thm}[\cite{original}] \label{previousmain}
The maximum number of Hamiltonian paths in the complete graph $K_n$ such that every pairwise union contains a triangle is equal to the number of balanced bipartitions of $[n]$.
\end{thm}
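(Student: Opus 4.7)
The upper bound is immediate from Proposition~\ref{prop}: a triangle is a (length-three) odd cycle, so any family of Hamiltonian paths in $K_n$ with a triangle in every pairwise union is in particular a family with an odd cycle in every pairwise union, and hence has size at most the number of balanced bipartitions of $[n]$. So all the substantive work goes into the matching lower bound.

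For the lower bound the plan is to exhibit, for each balanced bipartition $(A,B)$ of $[n]$, a canonical Hamiltonian path $P_{(A,B)}$ realising that bipartition, and then show that every pair of these canonical paths shares a triangle in its union. A natural first attempt is the \emph{alternating path}: writing $A=\{a_1<a_2<\dots\}$ and $B=\{b_1<b_2<\dots\}$ with labelling arranged so that $1\in A$ (and, when $n$ is odd, so that $A$ is the larger part), take $P_{(A,B)} = a_1\, b_1\, a_2\, b_2\, \dots$. This is clearly a Hamiltonian path bipartite with sides $A$ and $B$, so the upper bound is attained by the family, provided the triangle condition holds.

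The key verification, and the main obstacle, is that for any two distinct balanced bipartitions, the union $P_{(A_1,B_1)} \cup P_{(A_2,B_2)}$ contains a triangle. Combinatorially this amounts to finding three vertices $u,v,w$ such that two of the edges $uv,vw,uw$ lie in one path and the third in the other, or equivalently that some pair of elements appears as neighbours in one permutation and with exactly one element between them in the other. My strategy would be to examine the smallest index $i$ at which the two bipartitions disagree and exploit the local alternating structure of both paths around $i$, with the case analysis driven by the pattern of the symmetric difference $(A_1\triangle A_2)$ on the initial segment $[1,i]$ and on a short window past $i$. The delicate point is handling every configuration uniformly: if the naive canonical path fails in some degenerate cases, one would have to rescue it by a local modification (for example transposing two consecutive path entries), and then verify that the modification does not destroy the triangle property against any of the other paths in the family. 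That bookkeeping is where I expect the real effort of the proof to be concentrated.
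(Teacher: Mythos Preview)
The theorem is quoted from \cite{original} and not proved in this paper, but Section~\ref{sec:lower_bound} describes the machinery that \cite{original} uses, and it is \emph{not} ``assign one canonical path per bipartition and then verify triangles pairwise.'' Instead one builds a large family of pairwise compatible labelled graphs by iterated merging operations, and the $Z$-swapping construction turns each labelled graph into several Hamiltonian paths. Compatibility (a shared edge labelled $a$ in one graph and $b$ in the other) forces the triangle automatically, with no case analysis on pairs of bipartitions at all. The paper remarks that \cite{original} reaches the exact count by using all subsets, not just the middle layer, in the multiple merging step.

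Your upper bound is correct and matches the paper. Your lower bound, however, is only a plan: the entire substance of the theorem is the verification you defer, namely that any two alternating paths for distinct balanced bipartitions share a triangle, and you already anticipate that the naive choice may fail and need local repairs whose side-effects must then be controlled against every other path in the family. That is not minor bookkeeping --- the paper itself flags that for $P(n,3)$ ``the construction is non trivial'' --- and the labelled-graph/merging approach of \cite{original} was devised precisely to sidestep any such pairwise verification. So your route is genuinely different from the one actually taken, and as written it has a real gap exactly where the hard work would go.
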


There are two natural ways to generalize this problem. We can consider it a problem for Hamiltonian paths and ask for a $k$-cycle instead of a triangle.  In this case, when $k$ is even there are constructions of size larger than exponential, see \cite{k=4}. In this paper we take a different approach and we formulate the problem in the language of permutations.

\begin{defi}
We say that two permutations of $[n]=\{1,2 \ldots n\}$ are \textit{$k$-neighbor separated} if there are two elements that are neighbors in one permutations and in the other permutation they are separated by exactly $k-2$ elements. Let the maximal number of pairwise $k$-neighbor separated permutations of $[n]$ be denoted by $P(n,k)$ and let  $P(k):= \lim_{n \rightarrow \infty} \sqrt[n]{P(n,k)}.$
\end{defi}

% Az elso nemtrivi

Two permutations are $2$-neighbor different if and only if their corresponding Hamiltonian paths share an edge. Determining $P(n,2)$ is a significantly different task than determining $P(n,k)$, for any $k>2$. The reason for this is that it is that when $k=2$ we are looking for similar Hamiltonian paths instead of different ones. Therefore the problem resembles Erdős-Ko-Rado like intersection problems. We will show that $P(n,2)$ is the number of Hamiltonian paths containing a fixed edge. For the upper bound we will use Katona's cycle method. We postpone this proof to Section \ref{sec:katona}.

Two permutations are $3$-neighbor separated if and only if their corresponding Hamiltonian paths form a triangle in their union. Note that in the case of $P(n,3)$, the upper bound is immediate and the construction is non trivial. Making it significantly different from the case of $P(n,2)$. The statement of Theorem \ref{previousmain} is equivalent to saying \[P(n,3)= \begin{cases} \binom{n}{\left \lfloor \frac{n}{2} \right \rfloor} & \text{ when } n \equiv 1 \mod{2} \\
\frac{1}{2}\binom{n}{\left \lfloor \frac{n}{2} \right \rfloor} & \text{ when } n \equiv 0 \mod{2}.
\end{cases}\]

\noindent  From Theorem \ref{previousmain} it also follows that $P(3)=2$. For $k>3$ the property that two permutations are $k$-neighbor separated is stronger than the requirement that the corresponding Hamiltonian paths form a $k$-cycle. The authors conjecture the following.

\begin{conj}\label{conj:main}
For every integer $3<k , we have P(k) \geq 2$.
\end{conj}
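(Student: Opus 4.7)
\medskip

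\noindent\textbf{Proof proposal.} As the statement is an open conjecture, I sketch plausible attack strategies rather than a complete argument. The conjecture amounts to $P(n,k) = 2^{n-o(n)}$ for every fixed $k > 3$, matching the implicit upper bound behind the asymptotic statement in the introduction.

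The first approach is to leverage the case $k = 2^\ell + 1$ proved in the paper and search for a reduction to nearby values of $k$. Given a family $\mathcal{F}$ witnessing $P(n, 2^\ell+1) = 2^{n-o(n)}$, one would try to produce a comparably sized family witnessing $P(m,k) \geq 2^{m - o(m)}$ for some $m$ not much larger than $n$, via a uniform insertion, blow-up, or twist of the elements of $\mathcal{F}$. For example, replacing each element by a short block of fixed length and interleaving controls neighbor separations in a predictable way; the question is whether any such transformation hits exactly $k-2$ for the specified $k$. Making this work uniformly in $k$ is itself a nontrivial combinatorial problem, but the loss of a constant multiplicative factor in $n$ disappears under the $n$-th root.

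For even $k$ no doubling-like construction is available, so an independent approach is needed. A natural candidate is to build permutations from roughly $n/2$ ordered pairs tiling $[n]$, encoding a binary string of length $\Theta(n)$ through the orientations and relative order of the pairs. The hope is that parity constraints within pairs force any two distinct codewords to exhibit a neighbor pair whose image in the other permutation is separated by exactly $k-2$. A third, complementary route is the probabilistic method on a structured ambient family. A uniformly random pair of permutations is not $k$-neighbor separated with probability bounded away from $0$ (the expected number of neighbor pairs separated by exactly $k-2$ is $\Theta(1)$), so direct deletion yields only constantly many permutations; however, restricting to a symmetric subfamily of size $2^{(1-o(1))n}$ in which the joint distribution of adjacent images is suitably spread out could drive the bad-pair probability to $e^{-\Omega(n)}$ and preserve exponential size after deletion.

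The principal obstacle is the absence of a construction behaving uniformly in $k$. The two successful constructions, for $k = 3$ and for $k = 2^\ell + 1$, are sharply tuned to their parameter, and it is unclear what single combinatorial object should take their place for general $k$. The asymptotic identity $\lim_{k \to \infty} P(k) = 2$ is heuristically encouraging -- it suggests the problem becomes \emph{easier} as $k$ grows -- but it does not produce a construction at any fixed $k$, and bridging this gap is where the bulk of the work must go.
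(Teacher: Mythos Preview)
You correctly recognize that this statement is a conjecture, not a theorem; the paper does not prove it and there is no proof to compare against. The paper establishes only the special cases $k = 2^\ell + 1$ (Theorem~\ref{thm:main}) and the asymptotic $\lim_{k\to\infty} P(k) = 2$.

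One small correction: the conjecture is the one-sided bound $P(k) \geq 2$, not the two-sided $P(n,k) = 2^{n-o(n)}$ you state. The matching upper bound $P(k) \leq 2$ is available only for odd $k$, via the balanced-bipartition argument behind Proposition~\ref{prop}; for even $k$ the paper's upper bound (Theorem~\ref{thm:upper}) is strictly larger than $2$, and the authors note that computer search has found families for some even $k$ exceeding the number of balanced bipartitions.

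Your suggested attacks are plausible but differ from the route the paper itself singles out. In Section~\ref{opr} the authors observe that their construction for $k = 2^\ell + 1$ goes through pairwise compatible labelled graphs, and that a family of $2^{n-o(n)}$ compatible labelled graphs each consisting of a \emph{single} grid of width $w$ would immediately give $P(w\ell + 1) \geq 2$ for every $\ell$ via a product-of-grids argument; this is their proposed lever for reaching new values of $k$, rather than direct surgery on the permutations. Your insertion/blow-up reduction and the probabilistic approach are not discussed in the paper; the latter, as you note, faces the obstacle that the expected number of good pairs is only $\Theta(1)$, so a highly structured ambient family would be needed, and none is currently known.
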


Note that from Conjecture \ref{conj:main} it follows that for all positive integers $k$,  $P(2k+1)=2$ by the fact that the number of pairwise $2k$-separated permutations is at most the number of $C_{2k+1}$-different Hamiltonian paths. But the number of these Hamiltonian paths is at most the number of balanced bipartitions of $[n]$ by the upper bound in Proposition \ref{prop}. For even values of $k$, the authors are more cautious, as in these cases sometimes there are larger constructions (found by computer) than the number of balanced bipartitions of the ground set, although these constructions are still smaller than $2^n$. The main result of the present paper is that Conjecture \ref{conj:main} holds for infinitely many $k$.

\begin{thm}\label{thm:main}
For every positive integer $\ell$, $P(2^\ell+1) = 2$.
\end{thm}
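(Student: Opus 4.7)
The upper bound $P(2^\ell+1)\le 2$ falls out of Proposition \ref{prop} without further work. If $\pi$ and $\sigma$ are $(2^\ell+1)$-neighbor separated, then their Hamiltonian paths union to a graph containing $C_{2^\ell+1}$: the single adjacency in $\pi$ provides one edge, and the $2^\ell$ consecutive edges of $\sigma$ running between the two witness elements close an odd cycle. Hence any family of pairwise $(2^\ell+1)$-neighbor separated permutations satisfies the hypothesis of Proposition \ref{prop} and has cardinality at most $\binom{n}{\lfloor n/2 \rfloor}=2^{n-O(\log n)}$, giving $P(2^\ell+1)\le 2$ on taking $n$-th roots.

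The matching lower bound $P(2^\ell+1)\ge 2$ is the content of the theorem, and the plan is to construct, for each $\ell$ and each sufficiently large $n$, a family of $2^{(1-o(1))n}$ pairwise $(2^\ell+1)$-neighbor separated permutations of $[n]$. The natural approach is induction on $\ell$, with the base case $\ell=1$ supplied by Theorem \ref{previousmain}. The inductive step should take the form of a doubling lemma: from a family $\mathcal F$ of $F(n)$ pairwise $(2^{\ell-1}+1)$-separated permutations of $[n]$, produce a family of at least $F(n)\cdot 2^{(1-o(1))n}$ pairwise $(2^\ell+1)$-separated permutations of $[2n]$. Iterating $\ell-1$ times starting from $F(n)=2^{(1-o(1))n}$ recovers the exponential growth rate $2$ at every $\ell$.

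The motivation for doubling the ground set is that the target "other-permutation" distance exactly doubles, from $2^{\ell-1}$ to $2^\ell$. I would set up the lift on $[2n]=[n]\times\{0,1\}$ by \emph{blowing up} each base permutation $\pi$, assisted by an auxiliary binary vector $v\in\{0,1\}^n$: define $\widetilde\pi_v$ to place $(\pi(i),v_i)$ at position $2i-1$ and $(\pi(i),1-v_i)$ at position $2i$. Straightforward checking shows that if two elements are at distance $d$ in $\pi$, then some pair of their copies are at distance exactly $2d$ in $\widetilde\pi_v$ (and others at $2d\pm 1$), while the blowup creates new adjacencies: the two copies of $\pi(i)$ are always neighbors, and $(\pi(i),1-v_i)\sim(\pi(i+1),v_{i+1})$. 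The auxiliary vector $v$ provides the additional $2^n$ factor needed to grow the family at rate $2$, so that $|\mathcal F|\cdot|V|$ can reach $2^{(1-o(1))\cdot 2n}$ on $[2n]$.

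The hard part is verifying pairwise $(2^\ell+1)$-separation of the lifted family, and this splits into two cases. When the base permutations differ, the witness pair $(a,b)$ of $(2^{\ell-1}+1)$-separation in $\mathcal F$ becomes, in the blowup, adjacent in $\widetilde\pi_v$ via the crossover $(a,1-v_p)\sim(b,v_{p+1})$, while in $\widetilde{\pi'}_{v'}$ its four copy-pairs lie at distances $\{2^\ell-1,2^\ell,2^\ell,2^\ell+1\}$; the question is whether one can select the copies so that the \emph{specific} adjacent pair lands at distance exactly $2^\ell$, and this amounts to a parity constraint $v_p\oplus v_{p+1}\oplus v'_r\oplus v'_{r+2^{\ell-1}}=1$ on the auxiliary vectors. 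When $\pi=\pi'$ but $v\neq v'$, one must produce a witness from the disagreements of $v$ and $v'$ alone. The main obstacle is to restrict the auxiliary vectors to a sufficiently structured subset $V\subseteq\{0,1\}^n$ of size $2^{(1-o(1))n}$ such that both parity constraints are always satisfiable; I expect this $V$ to be chosen as a coset of a linear code with prescribed support structure (so that every two distinct vectors in $V$ differ on positions far enough apart relative to $2^\ell$), and confirming the count of such a code while preserving the cross-base separation is the central technical step.
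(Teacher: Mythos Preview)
Your upper bound argument is correct and matches the paper's.

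The lower bound construction, however, has a structural obstruction in your Case~2 (same base permutation $\pi$, different auxiliary vectors $v\neq v'$) that no choice of $V\subseteq\{0,1\}^n$ can repair. In the blowup $\widetilde\pi_v$, the element $(\pi(i),\epsilon)$ occupies position $2i-1$ or $2i$, and replacing $v$ by $v'$ moves each element by at most one position. Consequently any pair adjacent in $\widetilde\pi_v$ has distance at most $3$ in $\widetilde\pi_{v'}$: the within-block adjacency $(\pi(i),0)\sim(\pi(i),1)$ stays adjacent, and the cross-block adjacency $(\pi(i),1-v_i)\sim(\pi(i+1),v_{i+1})$, which sits at positions $2i,2i+1$, lands in $\{2i-1,2i\}\times\{2i+1,2i+2\}$ under $v'$. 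For $\ell\ge 2$ the required distance $2^\ell\ge 4$ is therefore unattainable, so the very first inductive step (from $\ell=1$ to $\ell=2$) already fails. Taking $|V|=1$ sidesteps Case~2 but leaves only $|\mathcal F|\le 2^{(1-o(1))n}$ permutations on $[2n]$, i.e.\ growth rate $\sqrt{2}$, so the doubling lemma cannot be salvaged in this form. The code restriction you propose governs \emph{where} $v$ and $v'$ differ, but the barrier is that each difference only perturbs positions by $\pm 1$, which is independent of support structure.

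The paper's construction circumvents this by having the auxiliary bits act on blocks of width $w=2^\ell$ rather than on single positions. Permutations are encoded as zig-zag traversals of a disjoint union of $w\times h$ labelled grids (the ``$Z$-swapping construction''), and each bit decides whether a given grid is traversed left-to-right or right-to-left; flipping one bit moves the entry point of the next block by exactly $w$, which immediately yields a $(w+1)$-neighbor-separation witness. The analogue of your inductive doubling is the ``width doubling operation'': pairs of width-$w$ grids are merged either side-by-side (producing width $2w$) or stacked (producing height $2h$), and the two outcomes share an edge that is horizontal in one and vertical in the other, guaranteeing compatibility. The bookkeeping is that each width doubling costs only a sub-exponential factor, so iterating $\ell$ times still leaves $2^{n-o(n)}$ permutations.
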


\begin{corr}
For every positive integer $\ell$, the maximal number of Hamiltonian paths of $K_n$ where every pairwise union contains a $C_{2^\ell+1}$ is $2^{n-o(n)}.$
\end{corr}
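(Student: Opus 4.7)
The plan is to sandwich the quantity in question between matching bounds by combining Theorem~\ref{thm:main} (for the lower bound) with Proposition~\ref{prop} (for the upper bound). For the lower bound I would take a family $\mathcal F$ of pairwise $(2^\ell+1)$-neighbor separated permutations of $[n]$ of size $P(n,2^\ell+1)$ provided by Theorem~\ref{thm:main}, and associate to every $\sigma\in\mathcal F$ the Hamiltonian path $P_\sigma$ of $K_n$ whose edges are the consecutive pairs of $\sigma$. If $\sigma,\tau\in\mathcal F$ are distinct, then by definition there exist elements $u,v$ adjacent in (say) $\sigma$ and separated by exactly $2^\ell-1$ elements in $\tau$; the edge $uv\in E(P_\sigma)$ together with the $u$-to-$v$ subpath of $2^\ell$ edges in $P_\tau$ is a $C_{2^\ell+1}$ in $P_\sigma\cup P_\tau$. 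This also forces $P_\sigma\neq P_\tau$, since $P_\sigma\cup P_\sigma=P_\sigma$ contains no cycles at all. Hence $\mathcal F$ yields $P(n,2^\ell+1)=2^{n-o(n)}$ distinct Hamiltonian paths whose pairwise unions each contain a $C_{2^\ell+1}$.

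For the upper bound, since $C_{2^\ell+1}$ is an odd cycle, every family of Hamiltonian paths whose pairwise unions contain $C_{2^\ell+1}$ is in particular a family whose pairwise unions contain some odd cycle. Proposition~\ref{prop} therefore bounds its size by $\binom{n}{\lfloor n/2\rfloor}$, and Stirling's approximation gives $\binom{n}{\lfloor n/2\rfloor}=2^{n-o(n)}$.

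I do not expect any real obstacle: the corollary is essentially a repackaging of Theorem~\ref{thm:main} with the classical odd-cycle upper bound of Proposition~\ref{prop}. The one point that deserves an explicit check, carried out above, is that the pairwise $(2^\ell+1)$-neighbor separated permutations do give rise to pairwise distinct Hamiltonian paths whose unions contain a cycle of the specific length $2^\ell+1$, not merely some odd cycle; the latter is what is ultimately bounded from above and used in the sandwich.
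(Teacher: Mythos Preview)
Your proposal is correct and follows exactly the paper's approach: the lower bound comes from Theorem~\ref{thm:main} via the permutation-to-path correspondence, and the upper bound from the odd-cycle argument of Proposition~\ref{prop}. You have simply spelled out the details (in particular the injectivity of $\sigma\mapsto P_\sigma$ on the family) that the paper leaves implicit in its two-line proof.
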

\begin{proof}
The construction follows from Theorem \ref{thm:main} and for the upper bound, the proof of the upper bound of Proposition \ref{prop} applies verbatim.
\end{proof}

\noindent We also prove upper bounds to $P(n,k)$ for every $k$.

\begin{thm} \label{thm:upper}
For fixed $2<k$ we have \[P(n,k) \leq \begin{cases} 2^{n} & \text{when } k \equiv 1 \mod{2}\\
2^{H\left( \frac{k-1}{2k},\frac{k-1}{2k},\frac{2}{2k}\right)n} & \text{when } k \equiv 0 \mod{2}
\end{cases} \]
where $H(x,y,z)$ is the entropy function.
\end{thm}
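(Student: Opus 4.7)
My plan for the odd case is to observe that it is essentially already implicit in the discussion preceding Proposition~\ref{prop}. If $k$ is odd and $\pi_1, \pi_2$ are $k$-neighbor separated, then the union of their Hamiltonian paths contains a copy of $C_k$, an odd cycle, so that union is not bipartite. Since every Hamiltonian path on $[n]$ is bipartite with a uniquely determined balanced bipartition of $[n]$, the two paths must live on different bipartitions. Hence the map $\pi \mapsto (\text{bipartition of the path of } \pi)$ is injective on the family, and its range has size at most $2^n$, giving the bound.

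For even $k$ this argument breaks because $C_k$ is bipartite. The plan is to replace the binary bipartition invariant by a ternary one whose type matches the entropy appearing in the theorem. Concretely, for each permutation $\pi$ I want to produce a coloring $c_\pi : [n] \to \{0,1,2\}$ with symbol counts forced to be $\bigl((k-1)n/(2k),\,(k-1)n/(2k),\,n/k\bigr)$, such that $k$-neighbor separated permutations are never mapped to the same coloring. A natural template is to mark every $k$-th vertex along the Hamiltonian path (a set of $n/k$ elements) with color $2$, and to $2$-color the remaining $(1-1/k)n$ vertices using the bipartition of the path, flipping the starting side on alternate $k$-blocks so that the color-$0$ and color-$1$ classes come out balanced with $(k-1)n/(2k)$ elements each.

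The crux of the argument — and the main obstacle — is verifying that this coloring really is injective on any pairwise $k$-neighbor separated family. Given a witness pair $(x,y)$ that is adjacent in $\pi_1$ and at distance $k-1$ in $\pi_2$, one must show that at least one of $c_{\pi_1}(x)\neq c_{\pi_2}(x)$ or $c_{\pi_1}(y)\neq c_{\pi_2}(y)$ is forced. This reduces to a finite case analysis on the residues modulo $k$ of the positions of $x$ and $y$ in each permutation (and on the parity of the block containing them), using that $k-1$ is odd so the two endpoints of a length-$(k-1)$ sub-path of $\pi_2$ fall in opposite sides of the bipartition. To make the case analysis go through for every pair of residues simultaneously, the labeling rule may have to be refined beyond the naive periodic template — for instance by averaging over the $2k$ cyclic phases of the block pattern and choosing, for each $\pi$, a phase that is canonical with respect to some tie-breaking rule. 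I expect this to be the technical heart of the argument.

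Granted the injectivity, the theorem is just a counting step: the number of colorings of $[n]$ with the prescribed multiplicities is the multinomial coefficient
\[
\binom{n}{(k-1)n/(2k),\,(k-1)n/(2k),\,n/k},
\]
which by Stirling's approximation is at most $2^{H\left(\frac{k-1}{2k},\frac{k-1}{2k},\frac{2}{2k}\right)n}$, exactly the bound claimed.
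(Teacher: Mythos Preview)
Your overall plan coincides with the paper's: for odd $k$ the bipartition argument is exactly what is used, and for even $k$ the paper also bounds $P(n,k)$ by the number of admissible $3$-colorings of $[n]$ and then invokes the entropy estimate for multinomials.

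The gap is in your concrete coloring. With a period-$2k$ template (color~$2$ at every $k$-th position and the bipartition colors on the rest), the set of unordered neighbor color-pairs and the set of distance-$(k-1)$ color-pairs are \emph{not} disjoint. For instance with $k=4$ and the pattern $0\,1\,0\,2\,1\,0\,1\,2$ of period $8$, the pair $\{0,2\}$ occurs both as a neighbor pair (positions $3,4$) and as a distance-$3$ pair (positions $1,4$); hence two permutations can share this coloring and still be $4$-neighbor separated. Averaging over cyclic phases does not help: the set of color-pairs produced at distance $1$ and at distance $k-1$ is a property of the template alone and is invariant under phase shifts, so no canonical choice of phase per permutation can create the needed disjointness.

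The paper resolves this by taking period $2k-2$ rather than $2k$. Writing $m\equiv i\pmod{2k-2}$ with $m\in\{1,\dots,2k-2\}$, the $i$-th element of $\pi$ is colored \emph{blue} if $m\in\{1,k\}$, \emph{red} if $m\in\{2,\dots,k-1\}$, and \emph{green} if $m\in\{k+1,\dots,2k-2\}$. Because $k-1$ is exactly half the period, every distance-$(k-1)$ pair is red--green or blue--blue, while every neighbor pair is red--red, red--blue, green--green or green--blue. These two lists are disjoint, so the injectivity you flagged as the ``technical heart'' falls out immediately with no case analysis. Note that this coloring has color frequencies $\tfrac{k-2}{2k-2},\tfrac{k-2}{2k-2},\tfrac{2}{2k-2}$, so the entropy the paper's proof actually delivers is $H\bigl(\tfrac{k-2}{2k-2},\tfrac{k-2}{2k-2},\tfrac{2}{2k-2}\bigr)$, which differs slightly from the expression written in the theorem statement.
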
\noindent From Theorem \ref{thm:upper} it follows that $\limsup_{k \rightarrow \infty} P(k)=2$. We also prove that $\liminf_{k \rightarrow \infty} P(k)=2$ which results in $\lim_{k \rightarrow \infty} P(k)=2.$ We also investigate $P(n,n)$.

\begin{thm} \label{k=n} For every positive integer $n$ the following holds
\[\def\arraystretch{1.3}
\begin{array}{rrll}
& P(n,n)= & \frac{3}{2}n &  \text{when } n \equiv 0 \mod{2}\\
\left\lfloor \frac{3}{2}n \right\rfloor  -1 & \leq P(n,n) \leq & \left\lfloor \frac{3}{2}n \right\rfloor & \text{when } n \equiv 1 \mod{2}.
\end{array}
\]
\end{thm}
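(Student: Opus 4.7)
The plan is to establish the upper bound $P(n,n) \le \lfloor 3n/2 \rfloor$ for every $n$ and to match it up to a gap of one by explicit constructions. I start by reformulating: for a permutation $\pi$ of $[n]$, write $e(\pi) = \{\pi(1),\pi(n)\}$ for its endpoint pair and $E_\pi$ for the edge set of its associated Hamilton path. Since a pair of elements occupies positions $1$ and $n$ of $\pi$ iff it equals $e(\pi)$, two permutations are $n$-neighbor separated iff $e(\pi) \in E_\sigma$ or $e(\sigma) \in E_\pi$. As $e(\pi) \notin E_\pi$ for $n \ge 3$, any family $\mathcal{F}$ of pairwise $n$-neighbor separated permutations has pairwise distinct endpoint pairs.

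The heart of the upper bound is the following lemma: for each $v \in [n]$, at most three permutations of $\mathcal{F}$ have $v$ as an endpoint. Granting this, summing over $v$ gives $2|\mathcal{F}| \le 3n$, so $|\mathcal{F}| \le \lfloor 3n/2 \rfloor$. To prove the lemma, suppose $\pi_1,\pi_2,\pi_3,\pi_4 \in \mathcal{F}$ all have $v$ as an endpoint, with respective other endpoints $u_1,u_2,u_3,u_4$ (distinct by the setup). Let $w_i$ denote the unique neighbor of $v$ in the Hamilton path of $\pi_i$. The only edge of $\pi_j$'s path incident to the endpoint $v$ is $\{v,w_j\}$, so $e(\pi_i) = \{v,u_i\}$ is an edge of $\pi_j$'s path iff $u_i = w_j$. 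The separation condition on $\{i,j\}$ therefore becomes $u_i=w_j$ or $u_j=w_i$. Define a digraph $D$ on $\{1,2,3,4\}$ with $i \to j$ iff $u_j = w_i$. Every vertex has out-degree at most $1$ because $w_i$ is a single vertex and the $u_j$ are distinct, so $|E(D)| \le 4$; on the other hand, the separation condition forces at least one arc on each of the $\binom{4}{2} = 6$ pairs, so $|E(D)| \ge 6$. This contradiction proves the lemma.

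For the construction when $n$ is even, I take three pairwise edge-disjoint perfect matchings $M_1,M_2,M_3$ of $[n]$ whose three pairwise unions $M_i \cup M_j$ are each Hamilton cycles. Such triples exist for every even $n \ge 4$: start with $M_1$ and $M_2$ being the alternating matchings of the Hamilton cycle $1\text{-}2\text{-}\cdots\text{-}n\text{-}1$ and select $M_3$ explicitly (one checks a handful of small cases and gives a uniform recipe depending on $n \bmod 4$). For every edge $e \in M_i$, let $\pi_{i,e}$ be the Hamilton path obtained by deleting $e$ from the Hamilton cycle $M_i \cup M_{i+1}$ (with subscripts modulo $3$); this yields exactly $3 \cdot (n/2) = 3n/2$ permutations whose endpoint pairs form the pairwise disjoint set $M_1 \sqcup M_2 \sqcup M_3$. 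Compatibility is straightforward: permutations sharing a cycle have distinct marked edges, each lying in the common cycle hence in the other's path; and for $\pi_{i,e}$ and $\pi_{j,f}$ with $j = i+1 \pmod 3$, the marked edge $f \in M_j$ lies in $(M_i \cup M_j)\setminus\{e\}$, which is exactly the edge set of $\pi_{i,e}$'s path. For odd $n$, apply the even construction to $[n-1]$ and extend each resulting permutation to $[n]$ by inserting the extra vertex at a controlled position, yielding $3(n-1)/2 = \lfloor 3n/2 \rfloor - 1$ permutations.

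The main obstacle is the key lemma: without the sharper per-vertex estimate, crude edge-counting only gives $|\mathcal{F}| \le 2n - 1$, so the digraph argument with out-degree at most $1$ is the essential new ingredient. The remaining parts — exhibiting the three matchings for even $n$ and adapting to odd $n$ — are routine combinatorial checks.
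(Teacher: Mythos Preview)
Your argument is correct and matches the paper's proof almost step for step: the upper bound via the per-vertex ``at most three'' lemma is exactly the paper's $\Delta(G)\le 3$ argument (stated there in the language of labelled $H$-cycles, where your endpoint pair is the $b$-edge and your out-degree-$\le 1$ digraph count is their ``at most four compatible pairs versus six needed''), and your even-$n$ construction from three matchings with Hamiltonian pairwise unions is identical to theirs. The only divergence is the odd case, where the paper modifies two of the matchings to pick up the $n$-th vertex before rerunning the even construction, while you keep the $[n-1]$ family and insert $n$ afterwards; your version works but deserves one more sentence specifying the insertion point (e.g.\ subdivide an $M_i$-edge of the path, with a consistent cyclic orientation of $M_i\cup M_{i+1}$) so that no $M_{i+1}$-witness edge is destroyed.
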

\noindent A quick investigation by computer shows that the value of $P(3,3)$ and $P(5,5)$ is equal to the corresponding lower bound of Theorem \ref{k=n}, but the value of $P(7,7)$ attains the upper bound. 

The paper is organized as follows. In Section \ref{sec:lower_bound} we prove Theorem \ref{thm:main}. In Section \ref{sec:classic} we elaborate on the connection between the problem of determining $P(n,k)$ and some Bollobás-type questions. In Section \ref{sec:otherk} we prove that $\lim_{k \rightarrow \infty}P(k)=2$. In Section \ref{sec:pnn} we prove Theorem \ref{k=n}, followed by concluding remarks and some open questions.

\section{When $k=2$}\label{sec:katona}

In this short section we determine the exact value of $P(n,2)$ for all $n$. We use the following equivalent formulation: Two permutations are $2$-neighbor different if and only if their corresponding Hamiltonian paths share an edge. For the upper bounds we will use the following claims.

\begin{claim} \label{claim:hamdecomp}
When $n$ is even, the edges of the complete graph $K_n$ can be decomposed into edge disjoint Hamiltonian paths. 
\end{claim}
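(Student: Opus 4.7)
The plan is to give an explicit decomposition via the classical Walecki--Lucas construction. Write $n = 2m$ and identify the vertex set with $\mathbb{Z}/2m\mathbb{Z}$. Define the zig-zag template
$$P_0 := (0,\; 1,\; -1,\; 2,\; -2,\; \ldots,\; m-1,\; -(m-1),\; m),$$
and for each $k \in \{0, 1, \ldots, m-1\}$ let $P_k$ be the sequence obtained from $P_0$ by adding $k$ (mod $2m$) to every vertex. I would then argue that the $P_k$'s form $m$ edge-disjoint Hamiltonian paths whose edges partition $K_n$.

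First I would check that each $P_k$ is a Hamiltonian path: this is immediate because the offsets $0, 1, -1, 2, -2, \ldots, m-1, -(m-1), m$ appearing in the template hit every residue class mod $2m$ exactly once. Since the $m$ paths contribute $m(2m-1) = \binom{2m}{2}$ edges in total, pairwise edge-disjointness is equivalent to the fact that together they cover every edge of $K_n$, so only edge-disjointness needs to be verified.

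For edge-disjointness, the key observation is that the signed differences $v_j - v_{j-1}$ along $P_0$ are $+1, -2, +3, -4, \ldots, +(2m-1)$, which represent precisely the $2m-1$ nonzero residues modulo $2m$, each appearing once. Suppose now that an edge of $P_k$ coincides with an edge of $P_{k'}$ as an unordered pair. Either the underlying edges of $P_0$ have the same signed difference — which forces them to be the same edge of $P_0$, hence $k = k'$ — or their signed differences are negatives of one another. The latter case is the only nontrivial step: a direct calculation with the explicit template (matching the $j$-th and $(2m-j)$-th edges of $P_0$) shows that any such coincidence would force $k - k' \equiv m \pmod{2m}$. But this is impossible because $k, k' \in \{0, \ldots, m-1\}$ gives $k - k' \in \{-(m-1), \ldots, m-1\}$, none of which is congruent to $m$ modulo $2m$. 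This rules out all collisions and completes the decomposition.
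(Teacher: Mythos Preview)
Your proof is correct and is essentially the same Walecki-type construction the paper uses: the paper's path $(1,n,2,n-1,\ldots,n/2)$ is, after the obvious relabelling, a reflection of your zig-zag template, and the paper's ``antipodal pairs under rotation by $n/2$'' is exactly your observation that opposite signed differences force $k-k'\equiv m\pmod{2m}$. The only real difference is presentational --- you carry out the disjointness argument algebraically, while the paper phrases it via Euclidean edge-lengths in a regular $n$-gon.
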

\begin{proof}
Let us refer to the vertices of $K_n$ as $\{1,2,\ldots, n\}$. Let $\mathcal{F}$ consist of the Hamiltonian path $\{1,n,2,n-1,\ldots, n/2\}$  plus its first $n/2-1$ rotated versions. One can see that the $n/2$ Hamiltonian paths in $\mathcal{F}$ are pairwise edge disjoint by arranging the vertices $\{1,2,\ldots, n\}$  into a regular $n$-gon. If two Hamiltonian path shares an edge that edge must have the same (euclidean) length in both paths. But edges of the same length come in antipodal pairs in the Hamiltonian paths of $\mathcal{F}$ (except the single edge of maximal length in each path). Therefore rotation by at most $n/2-1$ produces pairwise edge disjoint paths.
\end{proof}

\begin{claim} \label{claim:hamcover}
When $n$ is odd, there is a set $\mathcal{F}$ of Hamiltonian paths of $K_n$ with size $|\mathcal{F}|=n$. And the intersection graph $G$ of $\mathcal{F}$ (the vertices are the Hamiltonian paths, two vertices are adjacent if the paths share an edge) is a cycle of length $n$. 
\end{claim}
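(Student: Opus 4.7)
The plan is to imitate the rotation construction from Claim~\ref{claim:hamdecomp}, but taking all $n$ cyclic rotations of a zigzag path rather than just $n/2$. Label the vertices of $K_n$ by $\mathbb{Z}_n$ and define the zigzag Hamiltonian path
\[ P_0 = (0,\, n-1,\, 1,\, n-2,\, 2,\, n-3,\, \ldots,\, (n-1)/2). \]
For each $k \in \mathbb{Z}_n$ let $P_k$ be the translate obtained by adding $k$ modulo $n$ to every vertex of $P_0$, and set $\mathcal F = \{P_0, P_1, \ldots, P_{n-1}\}$. Since $v \mapsto v+k$ is an automorphism of $K_n$, every $P_k$ is Hamiltonian. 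The endpoints of $P_k$ are $k$ and $k+(n-1)/2$, and these unordered pairs are pairwise distinct (for $n \ge 3$), so the $n$ paths in $\mathcal F$ are pairwise distinct.

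The core of the argument is to describe the edges of $P_0$ by their ``cyclic distance'' in $\mathbb{Z}_n$. For each $d \in \{1, \ldots, (n-1)/2\}$, the zigzag uses exactly two edges of cyclic distance $d$, one from each half of the zigzag. Writing each such edge uniquely as $\{a, a+d\}$ with base $a \in \mathbb{Z}_n$ (well-defined since $n$ is odd and $d < n/2$ forces $2d \not\equiv 0$), let $S_d \subseteq \mathbb{Z}_n$ be the two-element set of bases of the distance-$d$ edges of $P_0$. A short case analysis on the parity of $d$ yields explicit formulas for $S_d$, and from them the key structural fact: the two elements of $S_d$ always differ by $\pm(n-1)/2 \pmod n$, uniformly in $d$.

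Given this, the rest is a Cayley graph calculation. The distance-$d$ edges of $P_k$ have bases $S_d + k$, so $P_0$ and $P_k$ share an edge iff $S_d \cap (S_d+k) \neq \emptyset$ for some $d$. By the uniform structure of $S_d$, this happens iff $k \equiv 0$ or $k \equiv \pm(n-1)/2 \pmod n$. Hence the intersection graph of $\mathcal F$ is the Cayley graph on $\mathbb{Z}_n$ with connection set $\{\pm(n-1)/2\}$. Since $n = 2\cdot(n-1)/2 + 1$ gives $\gcd((n-1)/2, n) = 1$, this Cayley graph is a connected $2$-regular graph on $n$ vertices---that is, a cycle of length $n$, as desired.

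The main technical step is the parity case analysis proving the uniform ``difference $\pm(n-1)/2$'' property of $S_d$; once that is in hand, everything else reduces to basic algebra in $\mathbb{Z}_n$.
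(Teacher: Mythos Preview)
Your proof is correct and uses the same underlying construction as the paper: the zigzag path on $\mathbb{Z}_n$ together with all $n$ cyclic shifts. The analysis differs slightly. The paper observes that the zigzag path is the union of two near-perfect matchings drawn from the family $\{M_1,\ldots,M_n\}$ of rotated matchings (these are exactly the ``constant-sum'' edge classes in $\mathbb{Z}_n$), so two rotated paths share an edge if and only if they share a matching, and the indices of the two matchings in $H_i$ are $i$ and $i+(n-1)/2$; the cycle structure of the intersection graph is then immediate. You instead group edges by cyclic distance and compute that within each distance class the two bases differ by $(n+1)/2\equiv -(n-1)/2$, reaching the same Cayley-graph conclusion. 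Your route is a little more computational (the parity case analysis you allude to can in fact be replaced by the one-line observation that the two edges of $P_0$ at distance $d$ have endpoint-sums $0$ and $-1$, so their bases differ by $2^{-1}\equiv (n+1)/2$), while the paper's matching decomposition makes the intersection pattern visible without any computation.
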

\begin{proof}
Let us refer to the vertices of $K_n$ as $\{1,2,\ldots, n\}$. Let  $M_1$ be  the matching $\{(2,n),(3,n-1),\ldots , ((n+1)/2,(n+3)/2)\}$. For every $i \in \{2,n\}$ let us denote the rotated versions of $M_1$ by $M_{i}:= \{(2+(i-1),n+(i-1)),(3+(i-1),n-1+(i-1)),\ldots , ((n+1)/2+(i-1),(n+3)/2+(i-1))\}$ where everything is understood modulo $n$. Let $\mathcal{M}:= \{M_1, \ldots M_n\}$. Observe that the matchings in $\mathcal{M}$ are pairwise edge disjoint. The union of the matchings $H_1=M_1 \cup M_{(n+1)/2}$ is a Hamiltonian path. For each $2 \leq i \leq n$ let $H_{i}:= M_{i} \cup M_{(n-1)/2+(i-1)}$.  We claim that the set $\mathcal{F}:= \{H_1, \ldots H_n\}$ satisfies the conditions of our claim. The $H_i$ are Hamiltonian paths since for every $2 \leq i \leq n$, $H_i$ is just the rotated version of $H_1$. If two Hamiltonian paths in $\mathcal{F}$ share an edge, then they share a matching in $\mathcal{M}$ since the matchings in $\mathcal{M}$ were edge disjoint. Thus every Hamiltonian path $H_i$ in $\mathcal{F}$ corresponds to a set of two matchings, that are $\{M_{1+(i-1)},M_{(n+1)/2+(i-1)}\}$ and two Hamiltonian paths share an edge if and only if their corresponding set of matchings intersect. Thus it is easy to see that, the intersection graph of the Hamiltonian paths in $\mathcal{F}$ is indeed a cycle of length $n$.  
\end{proof}

\begin{thm} \label{thm:first}
For every positive integer $n$, we have $P(n,2)=(n-1)!.$
\end{thm}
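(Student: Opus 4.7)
The plan is to establish matching lower and upper bounds equal to $(n-1)!$.

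For the lower bound I take the family $\mathcal{F}$ of all Hamiltonian paths of $K_n$ containing a fixed edge $e=\{1,2\}$. Any two such paths share $e$ and are therefore pairwise $2$-neighbor separated; treating $e$ as a block in the corresponding sequence and arranging it with the remaining $n-2$ vertices gives exactly $(n-1)!$ undirected paths, so $P(n,2)\ge(n-1)!$.

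For the upper bound I run a double-counting (averaging) argument in the spirit of Katona's cycle method, splitting by the parity of $n$ and driven by the structures from Claims~\ref{claim:hamdecomp} and \ref{claim:hamcover}. In the even case, Claim~\ref{claim:hamdecomp} supplies a Hamiltonian path decomposition $\mathcal{D}=\{H_1,\ldots,H_{n/2}\}$ of $K_n$; for every $\sigma\in S_n$, the collection $\sigma(\mathcal{D})=\{\sigma(H_1),\ldots,\sigma(H_{n/2})\}$ is again edge-disjoint, so any family $\mathcal{F}$ of pairwise edge-sharing Hamiltonian paths satisfies $|\mathcal{F}\cap\sigma(\mathcal{D})|\le 1$. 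Double-counting pairs $(H,\sigma)$ with $H\in\mathcal{F}$ and $H\in\sigma(\mathcal{D})$ then gives: the $\sigma$-sum is at most $n!$, while for each $H\in\mathcal{F}$ and each $i\in\{1,\ldots,n/2\}$ the equation $\sigma(H_i)=H$ has exactly $2$ solutions (the two isomorphisms between paths on $n$ vertices), contributing $n|\mathcal{F}|$ in total. Hence $n|\mathcal{F}|\le n!$, i.e.\ $|\mathcal{F}|\le(n-1)!$.

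In the odd case, $n-1$ does not divide $\binom{n}{2}$ and no true Hamiltonian path decomposition of $K_n$ exists, so instead I use the near-decomposition $\mathcal{F}_0=\{H_1,\ldots,H_n\}$ supplied by Claim~\ref{claim:hamcover}, whose intersection graph is the cycle $C_n$. For $n\ge 5$ the cycle $C_n$ is triangle-free, hence any pairwise edge-sharing subset of $\sigma(\mathcal{F}_0)$ has size at most $2$, and the analogous double count yields $2n|\mathcal{F}|\le 2\cdot n!$, again $|\mathcal{F}|\le(n-1)!$. The main obstacle I anticipate is precisely this odd case: without a true decomposition one must work with a near-decomposition, and Claim~\ref{claim:hamcover} is engineered precisely so that its pairwise overlaps form a triangle-free cycle; one then has to check that the factor-of-$2$ loss coming from the cliques in $C_n$ cancels exactly against the factor-of-$2$ arising from the two path orientations, leaving the same bound $(n-1)!$ as in the even case.
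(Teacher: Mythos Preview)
Your proof is correct and follows essentially the same approach as the paper: the lower bound via Hamiltonian paths through a fixed edge, and the upper bound via Katona's averaging over all relabellings of the decomposition from Claim~\ref{claim:hamdecomp} (even $n$) and of the cyclic near-decomposition from Claim~\ref{claim:hamcover} (odd $n$). The double count, the bounds $|\mathcal{H}\cap\sigma(\mathcal{D})|\le 1$ resp.\ $\le 2$, and the factor-of-$2$ bookkeeping from the two orientations of a path all match the paper's argument line for line.
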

\begin{proof}
The lower bound $(n-1)! \leq P(n,2)$ follows from the observation that the number of Hamiltonian paths of $K_n$ containing a fixed edge is $(n-1)!$.

The upper bound when $n$ is even: By Claim \ref{claim:hamdecomp}, there is a decomposition of the edges of $K_n$ into Hamiltonian paths. Let us fix such a decomposition $\mathcal{F}$. Let us permute the ground set under $\mathcal{F}$: For every $\pi$ permutation of the set $[n]$, we define $\mathcal{F}_{\pi}$ to be the Hamiltonian path decomposition of $K_n$ obtained from $\mathcal{F}$ by relabelling the vertices of the ground set from $\{1,2,\ldots, n\}$ to $\{\pi(1),\pi(2), \ldots , \pi(n)\}.$ Let $\mathcal{H}$ be a family of Hamiltonian paths such that every pair of Hamiltonian paths in $\mathcal{H}$ have an edge in common. Since for every $\pi$,  $\mathcal{F}_{\pi}$ consists of edge disjoint Hamiltonian paths, clearly we have   $|\mathcal{H} \cap \mathcal{F}_{\pi}| \leq 1 $. Moreover, for every Hamiltonian path $H$ (not necessarily in $\mathcal{H}$), there are exactly $n$ permutations $\pi_1, \ldots , \pi_{n}$ such that for all $i \in \{1,\ldots ,n\}$,  $H \in \mathcal{F}_{\pi_i}$ (Every Hamiltonian path in $\mathcal{F}$ can be relabelled to $H$ in exactly two ways). Therefore  

\[ n |\mathcal{H}| \leq \sum_{\pi \in S_n} |\mathcal{H} \cap \mathcal{F}_{\pi}| \leq n!. 
 \] Thus the proof of the case where $n$ is even is complete.
 
The upper bound when $n$ is odd: We proceed similarly to the case when $n$ is even. Let $\mathcal{F}$ be a family of Hamiltonian paths as in Claim \ref{claim:hamcover}. for every permutation $\pi $, we define $\mathcal{F}_{\pi}$ to be the Hamiltonian path decomposition of $K_n$ obtained from $\mathcal{F}$ by relabelling the vertices of the ground set from $\{1,2,\ldots, n\}$ to $\{\pi(1),\pi(2), \ldots , \pi(n)\}.$ Let $\mathcal{H}$ be a family of Hamiltonian paths such that every pair of Hamiltonian paths in $\mathcal{H}$ have an edge in common. By Claim \ref{claim:hamcover}, for every permutation $\pi$, we have   $|\mathcal{H} \cap \mathcal{F}_{\pi}| \leq 2 $. For every Hamiltonian path $H$ (not necessarily in $\mathcal{H}$), there are exactly $2n$ permutations $\pi_1, \ldots , \pi_{n}$ such that for all $i \in \{1,\ldots ,n\}$,  $H \in \mathcal{F}_{\pi_i}$ (Every Hamiltonian path in $\mathcal{F}$ can be relabelled to $H$ in exactly two ways). Therefore 
\[ 2n |\mathcal{H}| \leq \sum_{\pi \in S_n} |\mathcal{H} \cap \mathcal{F}_{\pi}| \leq 2n! 
\]  and the proof is complete.
\end{proof}

\begin{remark}
Theorem \ref{thm:first} was also proven independently by Casey Tompkins, \cite{casey}.
\end{remark}

\section{The lower bound for $k = 2^\ell+1 $} \label{sec:lower_bound}
We say that two Hamiltonian paths are $k$-neighbor separated if their union contains a $k$-cycle, such that one of them has at least $k$ edges in that cycle. It is easy to see that two permutations are $k$-neighbor separated if and only if their corresponding Hamiltonian paths are $k$-neighbor separated. We will construct Hamiltonian paths instead of permutations. The main advantage of this is that we can draw helpful figures. We construct a suitable system of Hamiltonian paths in two steps.
First we build an appropriate set of  so-called ``labelled graphs'', then we construct numerous Hamiltonian paths from each labelled graph.

\subsection{Labelled graphs}

\begin{defi}
A labelled graph is a graph where each edge gets a label from $\{a,b\}$.
\end{defi}

We will be interested in disconnected labelled graphs where every connected component is a \textit{labelled grid} that is defined as follows.

\begin{defi}
A labelled grid of width $w$ and height $h$ is a graph on $wh$ vertices of the form $(i,j)$ where $1 \leq i \leq w$ and $1 \leq j \leq h$. Two vertices are connected by an edge of label {\em a} if they differ only in their first coordinate and the difference there equals one. Similarly, two vertices are connected by an edge of label {\em b} if they differ only in their second coordinate and the difference there is one, see Figure \ref{grids}.
\end{defi}

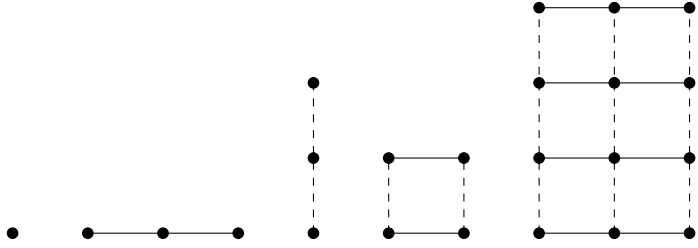
\begin{figure}[htbp]
\begin{center}
\begin{tikzpicture}

\filldraw[black] (1,1) circle (2pt);

\begin{scope}[shift={(1,1)}]
\filldraw[black] (1,0) circle (2pt);
\filldraw[black] (2,0) circle (2pt);
\filldraw[black] (3,0) circle (2pt);
\draw(1,0)--(3,0);
\end{scope}

\begin{scope}[shift={(5,0)}]
\filldraw[black] (0,1) circle (2pt);
\filldraw[black] (0,2) circle (2pt);
\filldraw[black] (0,3) circle (2pt);
\draw[dashed](0,1)--(0,3);
\end{scope}

\begin{scope}[shift={(6,0)}]
\filldraw[black] (0,1) circle (2pt);
\filldraw[black] (0,2) circle (2pt);
\filldraw[black] (1,1) circle (2pt);
\filldraw[black] (1,2) circle (2pt);
\draw (0,1)--(1,1);
\draw (0,2)--(1,2);
\draw[dashed](1,1)--(1,2);
\draw[dashed](0,1)--(0,2);
\end{scope}

\begin{scope}[shift={(8,0)}]
\filldraw[black] (0,1) circle (2pt);
\filldraw[black] (0,2) circle (2pt);
\filldraw[black] (0,3) circle (2pt);
\filldraw[black] (0,4) circle (2pt);
\filldraw[black] (1,1) circle (2pt);
\filldraw[black] (1,2) circle (2pt);
\filldraw[black] (1,3) circle (2pt);
\filldraw[black] (1,4) circle (2pt);
\filldraw[black] (2,1) circle (2pt);
\filldraw[black] (2,2) circle (2pt);
\filldraw[black] (2,3) circle (2pt);
\filldraw[black] (2,4) circle (2pt);
\draw[dashed] (0,1) -- (0,4);
\draw[dashed] (1,1) -- (1,4);
\draw[dashed] (2,1) -- (2,4);
\draw (0,1) -- (2,1);
\draw (0,2) -- (2,2);
\draw (0,3) -- (2,3);
\draw (0,4) -- (2,4);
\end{scope}

\end{tikzpicture}
\end{center}
\label{grids}
\caption{In our figures we draw the edges of label $a$ as ordinary edges and the edges of label $b$ as dashed ones. Here we see five labelled grids, their width and height in a $(w,h)$ format from the left to right is: $(1,1),$ $(3,1),$
$(1,3),$ $(2,2)$ and $(3,4).$ }
\end{figure}

\begin{defi}
A $w$-labelled graph  is the disjoint union of an isolated vertex and some labelled grids of width exactly $w$.
\end{defi}

The following construction describes, how to build Hamiltonian paths from $w$-weighed graphs. (In this paper we will build Hamiltonian paths only from $w$-labelled graphs for some integer $2\leq w$.)
\medskip

\noindent {\bf Z-swapping construction.} Let $2 \leq w$ and $W$ be a $w$-labelled graph and let $g$ be the number of labelled grids of width $w$ in $W$. We construct $2^g$ Hamiltonian paths from $W$ as follows. Fix an order of a components of $W$ where the first component is the isolated vertex. The Hamiltonian paths that we construct will be indexed by $0-1$ sequences of length $g$. Each Hamiltonian path starts at the isolated vertex of $W$, and visits the labelled grids according to the fixed order. At the $i$-th labelled grid if the $i$-th element in the $0-1$ sequence of the path is $1$ then the path starts at the top right vertex, if it is zero, it starts from the top left vertex. The path contains every edge of label $a$ of the labelled grid, and it visits the rows of the grid from the top to the bottom. Moreover if it started in the top right corner then it traverses every row from the right to the left, otherwise it traverses every row from left to the right, see Figure~\ref{connecting}.

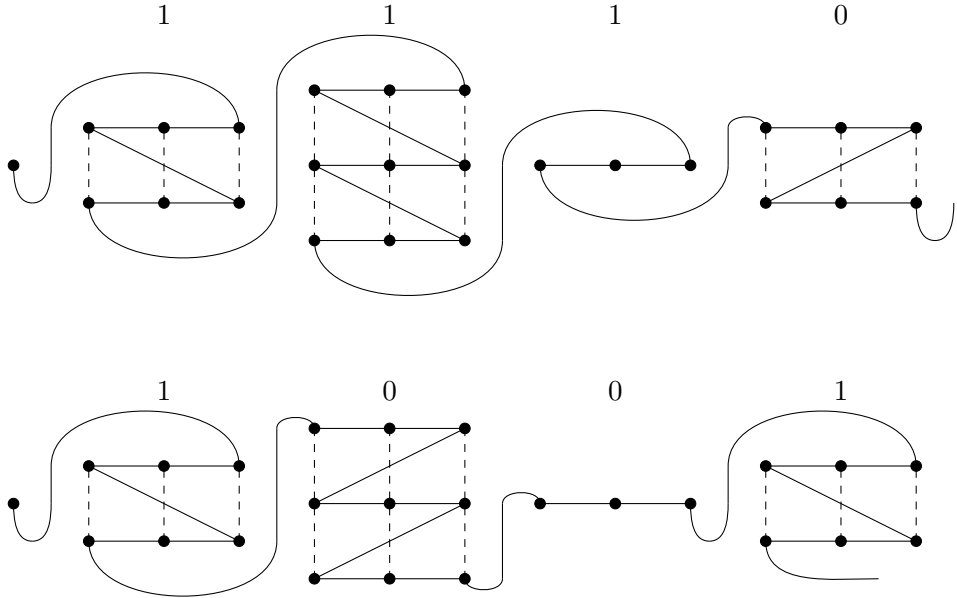
\begin{figure}
\begin{center}
\begin{tikzpicture}
\node() at (2,1.5){$1$};
\node() at (5,1.5){$0$};
\node() at (8,1.5){$0$};
\node() at (11,1.5){$1$};

\filldraw[black] (0,0) circle (2pt);

\begin{scope}[shift={(1,-0.5)}]%small one
\filldraw[black] (0,0) circle (2pt);
\filldraw[black] (1,0) circle (2pt);
\filldraw[black] (2,0) circle (2pt);
\filldraw[black] (0,1) circle (2pt);
\filldraw[black] (1,1) circle (2pt);
\filldraw[black] (2,1) circle (2pt);
\draw (0,0)--(2,0);
\draw (0,1)--(2,1);
\draw[dashed] (0,0)--(0,1);
\draw[dashed] (1,0)--(1,1);
\draw[dashed] (2,0)--(2,1);
\end{scope}

\begin{scope}[shift={(4,-1)}]%small one
\filldraw[black] (0,0) circle (2pt);
\filldraw[black] (1,0) circle (2pt);
\filldraw[black] (2,0) circle (2pt);
\filldraw[black] (0,1) circle (2pt);
\filldraw[black] (1,1) circle (2pt);
\filldraw[black] (2,1) circle (2pt);
\filldraw[black] (0,2) circle (2pt);
\filldraw[black] (1,2) circle (2pt);
\filldraw[black] (2,2) circle (2pt);
\draw (0,0)--(2,0);
\draw (0,1)--(2,1);
\draw (0,2)--(2,2);
\draw[dashed] (0,0)--(0,2);
\draw[dashed] (1,0)--(1,2);
\draw[dashed] (2,0)--(2,2);
\end{scope}

\begin{scope}[shift={(7,0)}]
\filldraw[black] (0,0) circle (2pt);
\filldraw[black] (1,0) circle (2pt);
\filldraw[black] (2,0) circle (2pt);
\draw (0,0)--(2,0);
\end{scope}

\begin{scope}[shift={(10,-0.5)}]%small one
\filldraw[black] (0,0) circle (2pt);
\filldraw[black] (1,0) circle (2pt);
\filldraw[black] (2,0) circle (2pt);
\filldraw[black] (0,1) circle (2pt);
\filldraw[black] (1,1) circle (2pt);
\filldraw[black] (2,1) circle (2pt);
\draw (0,0)--(2,0);
\draw (0,1)--(2,1);
\draw[dashed] (0,0)--(0,1);
\draw[dashed] (1,0)--(1,1);
\draw[dashed] (2,0)--(2,1);
\end{scope}

\draw  (0,0) to[out=-90,in=180]  (0.25,-0.5)  ;
\draw  (0.25,-0.5) to[out=0,in=-90]  (0.5,0)  ;
\draw  (0.5,0) to[out=90,in=-90] (0.5,0.5)   ;
\draw  (0.5,0.5) to[out=90,in=90]  (3,0.5)  ;
\draw  (1,0.5)-- (3,-0.5) ;
\draw  (1,-0.5) to[out=-90,in=-90]  (3.5,-0.5)  ;
\draw (3.5,-0.5) -- (3.5,1);
\draw  (3.5,1) to[out=90,in=90]  (4,1) ;
\draw (6,1)--(4,0);
\draw (6,0)--(4,-1);
\draw  (6,-1) to[out=-90,in=-90]  (6.5,-1) ;
\draw (6.5,-1) -- (6.5,0);
\draw  (6.5,0) to[out=90,in=90]  (7,0) ;
\draw  (9,0) to[out=-90,in=180]  (9.25,-0.5) ;
\draw  (9.25,-0.5) to[out=0,in=-90]  (9.5,0) ;
\draw (9.5,0)-- (9.5,0.5);
\draw  (9.5,0.5) to[out=90,in=90]  (12,0.5) ;
\draw (10,0.5)--(12,-0.5);
\draw  (10,-0.5) to[out=-90,in=180]  (11.5,-1) ;

\begin{scope}[shift={(0,4.5)}]

\node() at (2,2){$1$};
\node() at (5,2){$1$};
\node() at (8,2){$1$};
\node() at (11,2){$0$};

\filldraw[black] (0,0) circle (2pt);

\begin{scope}[shift={(1,-0.5)}]%small one
\filldraw[black] (0,0) circle (2pt);
\filldraw[black] (1,0) circle (2pt);
\filldraw[black] (2,0) circle (2pt);
\filldraw[black] (0,1) circle (2pt);
\filldraw[black] (1,1) circle (2pt);
\filldraw[black] (2,1) circle (2pt);
\draw (0,0)--(2,0);
\draw (0,1)--(2,1);
\draw[dashed] (0,0)--(0,1);
\draw[dashed] (1,0)--(1,1);
\draw[dashed] (2,0)--(2,1);
\end{scope}

\begin{scope}[shift={(4,-1)}]%small one
\filldraw[black] (0,0) circle (2pt);
\filldraw[black] (1,0) circle (2pt);
\filldraw[black] (2,0) circle (2pt);
\filldraw[black] (0,1) circle (2pt);
\filldraw[black] (1,1) circle (2pt);
\filldraw[black] (2,1) circle (2pt);
\filldraw[black] (0,2) circle (2pt);
\filldraw[black] (1,2) circle (2pt);
\filldraw[black] (2,2) circle (2pt);
\draw (0,0)--(2,0);
\draw (0,1)--(2,1);
\draw (0,2)--(2,2);
\draw[dashed] (0,0)--(0,2);
\draw[dashed] (1,0)--(1,2);
\draw[dashed] (2,0)--(2,2);
\end{scope}

\begin{scope}[shift={(7,0)}]
\filldraw[black] (0,0) circle (2pt);
\filldraw[black] (1,0) circle (2pt);
\filldraw[black] (2,0) circle (2pt);
\draw (0,0)--(2,0);
\end{scope}

\begin{scope}[shift={(10,-0.5)}]%small one
\filldraw[black] (0,0) circle (2pt);
\filldraw[black] (1,0) circle (2pt);
\filldraw[black] (2,0) circle (2pt);
\filldraw[black] (0,1) circle (2pt);
\filldraw[black] (1,1) circle (2pt);
\filldraw[black] (2,1) circle (2pt);
\draw (0,0)--(2,0);
\draw (0,1)--(2,1);
\draw[dashed] (0,0)--(0,1);
\draw[dashed] (1,0)--(1,1);
\draw[dashed] (2,0)--(2,1);
\end{scope}

\draw  (0,0) to[out=-90,in=180]  (0.25,-0.5)  ;
\draw  (0.25,-0.5) to[out=0,in=-90]  (0.5,0)  ;
\draw  (0.5,0) to[out=90,in=-90] (0.5,0.5)   ;
\draw  (0.5,0.5) to[out=90,in=90]  (3,0.5)  ;
\draw  (1,0.5)-- (3,-0.5) ;
\draw  (1,-0.5) to[out=-90,in=-90]  (3.5,-0.5)  ;
\draw (3.5,-0.5) -- (3.5,1);
\draw  (3.5,1) to[out=90,in=90]  (6,1) ;
\draw (4,1)--(6,0);
\draw (4,0)--(6,-1);
\draw  (4,-1) to[out=-90,in=-90]  (6.5,-1) ;
\draw (6.5,-1) -- (6.5,0);
\draw  (6.5,0) to[out=90,in=90]  (9,0) ;
\draw  (7,0) to[out=-90,in=-90]  (9.5,0) ;
\draw (9.5,0)-- (9.5,0.5);
\draw  (9.5,0.5) to[out=90,in=90]  (10,0.5) ;
\draw (12,0.5)--(10,-0.5);
\draw  (12,-0.5) to[out=-90,in=180]  (12.25,-1) ;
\draw  (12.25,-1) to[out=0,in=-90]  (12.5,-0.5) ;
\end{scope}

\end{tikzpicture}
\end{center}
\caption{How the $0-1$ sequence influences the 'Z' and reversed 'Z' shapes. In this example $w=3$.}
\label{connecting}
\end{figure}

\begin{claim} \label{hamiltonsize}
The $Z$-swapping construction applied to a $w$-labelled graph with $g$ labelled grids (not counting the isolated vertex) produces $2^g$ pairwise $w+1$-neighbor separated Hamiltonian paths.
\end{claim}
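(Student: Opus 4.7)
My plan is to show that for any two distinct bit strings $s \ne s'$ in $\{0,1\}^g$ the Hamiltonian paths $P_s$ and $P_{s'}$ produced by the Z-swapping construction are already $(w+1)$-neighbor separated. Since no Hamiltonian path is $(w+1)$-neighbor separated from itself, this would simultaneously confirm that distinct inputs yield distinct paths, so the construction indeed returns $2^g$ Hamiltonian paths.

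I would first let $i$ be the \emph{least} index where $s_i \ne s'_i$. Because $s_j = s'_j$ for every $j < i$, the two paths traverse the isolated vertex and grids $1, \ldots, i-1$ in exactly the same way, so they arrive at a common vertex $u$ from which they are about to enter grid $i$ (with $u$ being the isolated vertex itself when $i = 1$). Zooming in on grid $i$, and assuming without loss of generality that $s_i = 0$ and $s'_i = 1$, the construction forces $P_s$ to continue with the connector edge from $u$ to $(1,1)$ and then traverse the top row of grid $i$ left-to-right as $(1,1) \to (2,1) \to \cdots \to (w,1)$, whereas $P_{s'}$ continues with the connector edge from $u$ to $(w,1)$ and traverses the same top row right-to-left as $(w,1) \to (w-1,1) \to \cdots \to (1,1)$. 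In particular, every $a$-edge of the top row of grid $i$ appears in both paths.

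I then would take the pair $u$ and $(w,1)$ as the witnesses of $(w+1)$-neighbor separation. They are adjacent in $P_{s'}$ via the connector, whereas in $P_s$ the vertices strictly between them in order are $(1,1), (2,1), \ldots, (w-1,1)$, i.e.\ exactly $w-1$ elements; matching $k - 2 = w - 1$ yields $k = w + 1$ as required, and the corresponding $(w+1)$-cycle in $P_s \cup P_{s'}$ uses the two distinct connector edges together with the $w-1$ shared $a$-edges of the top row. The only point where real care is needed is verifying that $u$ truly coincides in $P_s$ and $P_{s'}$, which is the entire reason for selecting $i$ to be the smallest index of difference rather than an arbitrary one; with that choice in hand, the rest of the argument is a direct unfolding of the Z-swapping rule.
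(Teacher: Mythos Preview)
Your argument is correct and is essentially the same as the paper's: pick the first index $i$ where the bit strings differ, use that the paths coincide up through grid $i-1$ so the entry vertex $u$ is common, and then exhibit the $(w+1)$-cycle on $u$ together with the top row of grid $i$. The paper phrases the conclusion in terms of the $(w+1)$-cycle in the union (with $w$ edges in each path), while you single out the specific witness pair $u,(w,1)$, but this is the same idea.
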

\begin{proof}
The number of Hamiltonian paths is immediate from the definition. For the $w+1$-neighbor separatedness, consider two different Hamiltonian paths. These correspond to two $0-1$ sequences. If the first difference of these sequences occurs at the $i$-th coordinate, then the Hamiltonian paths differ at the $i$-th labelled grid. This means that the last vertex of the $(i-1)$-th grid (bottom left or bottom right, depending on the $0-1$ sequences of the grids) is connected to the top left of the $i$-th grid in one path, and to the top right in the other. Therefore the last vertex of the $(i-1)$-th grid and the $w$ vertices at the top of the $i$-th grid form a $w+1$-cycle in the union of the two Hamiltonian paths and actually both Hamiltonian paths contain $w$ edges from this cycle. Therefore the Hamiltonian paths are $w+1$-neighbor separated.
\end{proof}

Observe that when we build a Hamiltonian path $H$ from a  labelled graph using the $Z$-swapping construction, the vertices that are connected by an edge of label $a$ in $W$ are neighbors in $H$. The vertices that are connected by an edge of label $b$ in $W$ will be separated by $w-1$ other vertices in $H$ if the edge was in a grid of width $w$. Suppose that we have two labelled graphs $W_1,W_2$ that contain only grids of width $w$ (plus an isolated vertex), and both contain an edge $e$ which gets label $a$ in $W_1$ and label $b$ in $W_2$. Observe that the Hamiltonian paths constructed by the $Z$-swapping construction from $W_1$ and $W_2$ altogether form a pairwise $w+1$-neighbor separated family. This motivates the following definition.

\begin{defi}
Two labelled graphs are compatible if they share an edge which has different labels in the two labelled graphs.
\end{defi}

\begin{remark}
Note that for two graphs to be compatible we do not assume anything about the actual structure of the graphs, only that they share an edge with different labels. In Section \ref{sec:classic} we discuss the connection between the compatibility of labelled graphs and some Bollobás-type problems.
\end{remark}

In this paper the main connection between labelled graphs and Hamiltonian paths is the $Z$-swapping construction. From now on we work with labelled graphs instead of Hamiltonian paths. Let us start to build a large family of  $2$-labelled graphs. We will start every subsequent building process with this family.  We will build labelled grids of width $2$ using labelled graphs of width $1$ as building blocks.  Step by step, we will construct larger and larger families of pairwise compatible labelled graphs. However in the intermediate steps the labelled graphs are not yet $2$-labelled graphs since they might contain grids of width $1$.

\subsection{Merging operations}

We describe a procedure that takes a special labelled graph $W$ as input, and produces two compatible labelled graphs $W_1$ and $W_2$ by merging some of the labelled grids in $W$ in two different ways.

\medskip

\noindent {\bf Simple merging operations.} Suppose that we have a labelled graph $W$ that contains two labelled grids $g_1$ and $g_2$ of the same width and height. Let us denote the corner points of $g_1$ by $\{a, b, c, d\}$ and the corner points of $g_2$ by $\{e, f, g, h\}$, see Figure~\ref{fig:merge}.From $W$ we can construct two new labelled graphs $W_1$ and $W_2$, that differ from $W$ only in the way $g_1$ and $g_2$ are merged. We describe these merging methods as follows.
\begin{enumerate}
	\item {\bf Adjoining.} In $W_1$ the two grids are merged in such a way that the resulting labelled grid is two times as wide as the original ones, so we connect the rightmost points ($b-d$ side) of $g_1$ to the corresponding leftmost points ($e-g$ side) of $g_2$ by edges of label $a$, see Figure \ref{fig:merge}.
	\item {\bf Rotating down.}	$W_2$ is obtained from $W$ by ``rotating down'' the second labelled grid: the two grids are merged in such a way that the resulting labelled grid is two times as high as the original ones, we connect the vertices on the $c-d$ side to the vertices on the $h-g$ side by edges of label $b$, see Figure \ref{fig:merge}.
\end{enumerate}
 Now $W_1$ and $W_2$ are compatible (by the edge $dg$). Moreover, both $W_1$ and $W_2$ are compatible with any other graph that was originally compatible with $W$, since they contain $W$ as a subgraph with unchanged labels.
\medskip

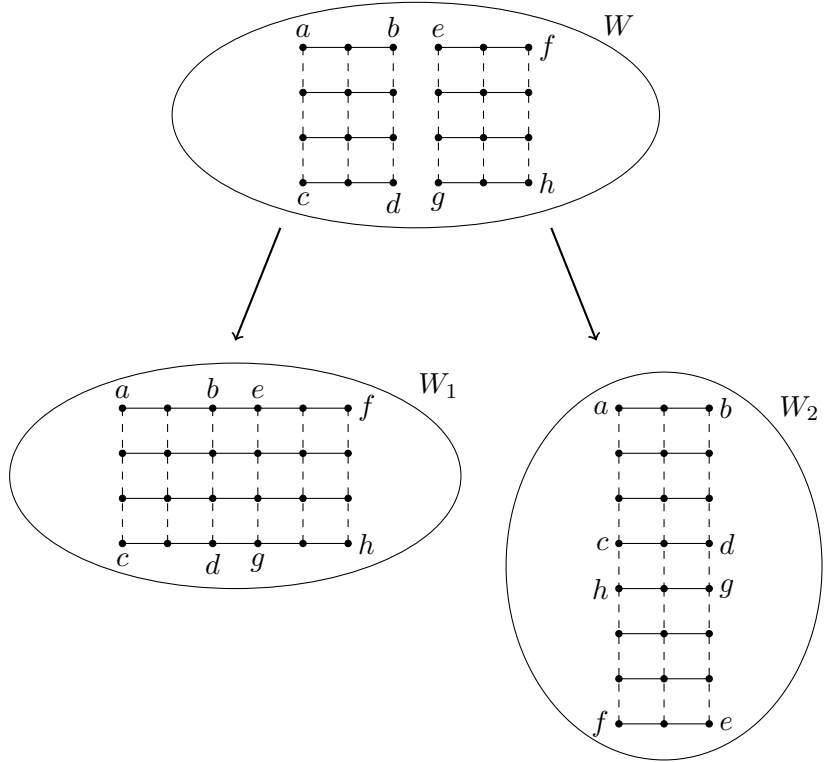
\begin{figure}[t]
\begin{center}
\begin{tikzpicture}[scale=0.6]

\begin{scope}[shift={(3,6)}] %A legfelso

%\draw (-1.5,-3.5) rectangle (10.5,2.5);

\draw (4.5,-0.5) ellipse (5.4cm and 2.5cm);

\node() at (9,1.5){$W$};
%\node() at (10,3){$\mathcal{F}$};

\filldraw[black] (2,1) circle (2pt)node[anchor=south] {$a$};
\filldraw[black] (2,0) circle (2pt);
\filldraw[black] (2,-1) circle (2pt);
\filldraw[black] (2,-2) circle (2pt)node[anchor=north] {$c$};
\filldraw[black] (3,1) circle (2pt);
\filldraw[black] (3,0) circle (2pt);
\filldraw[black] (3,-1) circle (2pt);
\filldraw[black] (3,-2) circle (2pt);
\filldraw[black] (4,1) circle (2pt)node[anchor=south] {$b$};
\filldraw[black] (4,0) circle (2pt);
\filldraw[black] (4,-1) circle (2pt);
\filldraw[black] (4,-2) circle (2pt)node[anchor=north] {$d$};
\draw (2,1) -- (4,1) ;%steps
\draw (2,0) -- (4,0) ;
\draw (2,-1) -- (4,-1) ;
\draw (2,-2) -- (4,-2) ;%stepsend
\draw[dashed](2,1) -- (2,-2);
\draw[dashed](3,1) --(3,-2);
\draw[dashed](4,1) --(4,-2);

 \begin{scope}[shift={(3
,0)}] %A legfelso jobboldali ladderje

 \filldraw[black] (2,1) circle (2pt)node[anchor=south] {$e$};
\filldraw[black] (2,0) circle (2pt);
\filldraw[black] (2,-1) circle (2pt);
\filldraw[black] (2,-2) circle (2pt)node[anchor=north] {$g$};
\filldraw[black] (3,1) circle (2pt);
\filldraw[black] (3,0) circle (2pt);
\filldraw[black] (3,-1) circle (2pt);
\filldraw[black] (3,-2) circle (2pt);
\filldraw[black] (4,1) circle (2pt)node[anchor=west] {$f$};
\filldraw[black] (4,0) circle (2pt);
\filldraw[black] (4,-1) circle (2pt);
\filldraw[black] (4,-2) circle (2pt)node[anchor=west] {$h$};
\draw (2,1) -- (4,1) ;%steps
\draw (2,0) -- (4,0) ;
\draw (2,-1) -- (4,-1) ;
\draw (2,-2) -- (4,-2) ;%stepsend
\draw[dashed](2,1) -- (2,-2);
\draw[dashed](3,1) --(3,-2);
\draw[dashed](4,1) --(4,-2);

   \end{scope}

\draw[->, thick] (1.5,-3) -- (0.5,-5.5);
\draw[->, thick] (7.5,-3) -- (8.5,-5.5);

\end{scope}

\begin{scope}[shift={(-1,-2)}]
\draw (4.5,-0.5) ellipse (5cm and 2.5cm);
\node() at (9,1.5){$W_1$};

%\draw (-1,-5) rectangle (19,4.5);

%\node() at (18,5){$\mathcal{G}$};

%Amikor mell tesszk
\filldraw[black] (2,1) circle (2pt)node[anchor=south] {$a$};
\filldraw[black] (2,0) circle (2pt);
\filldraw[black] (2,-1) circle (2pt);
\filldraw[black] (2,-2) circle (2pt)node[anchor=north] {$c$};
\filldraw[black] (3,1) circle (2pt);
\filldraw[black] (3,0) circle (2pt);
\filldraw[black] (3,-1) circle (2pt);
\filldraw[black] (3,-2) circle (2pt);
\filldraw[black] (4,1) circle (2pt)node[anchor=south] {$b$};
\filldraw[black] (4,0) circle (2pt);
\filldraw[black] (4,-1) circle (2pt);
\filldraw[black] (4,-2) circle (2pt)node[anchor=north] {$d$};
\draw (2,1) -- (5,1) ;%steps
\draw (2,0) -- (5,0) ;
\draw (2,-1) -- (5,-1) ;
\draw (2,-2) -- (5,-2) ;%stepsend
\draw[dashed](2,1) -- (2,-2);
\draw[dashed](3,1) --(3,-2);
\draw[dashed](4,1) --(4,-2);

 \begin{scope}[shift={(3
,0)}]%Amikor mell tesszk a msodik.

 \filldraw[black] (2,1) circle (2pt)node[anchor=south] {$e$};
\filldraw[black] (2,0) circle (2pt);
\filldraw[black] (2,-1) circle (2pt);
\filldraw[black] (2,-2) circle (2pt)node[anchor=north] {$g$};
\filldraw[black] (3,1) circle (2pt);
\filldraw[black] (3,0) circle (2pt);
\filldraw[black] (3,-1) circle (2pt);
\filldraw[black] (3,-2) circle (2pt);
\filldraw[black] (4,1) circle (2pt)node[anchor=west] {$f$};
\filldraw[black] (4,0) circle (2pt);
\filldraw[black] (4,-1) circle (2pt);
\filldraw[black] (4,-2) circle (2pt)node[anchor=west] {$h$};
\draw (2,1) -- (4,1) ;%steps
\draw (2,0) -- (4,0) ;
\draw (2,-1) -- (4,-1) ;
\draw (2,-2) -- (4,-2) ;%stepsend
\draw[dashed](2,1) -- (2,-2);
\draw[dashed](3,1) --(3,-2);
\draw[dashed](4,1) --(4,-2);

   \end{scope}

   \end{scope}

\begin{scope}[shift={(10,-2)}] %Amikor al tesszk

\node() at (6,1){$W_2$};

\draw (3,-2.5) ellipse (3.5cm and 4.3cm);

\filldraw[black] (2,1) circle (2pt)node[anchor=east] {$a$};
\filldraw[black] (2,0) circle (2pt);
\filldraw[black] (2,-1) circle (2pt);
\filldraw[black] (2,-2) circle (2pt)node[anchor=east] {$c$};
\filldraw[black] (3,1) circle (2pt);
\filldraw[black] (3,0) circle (2pt);
\filldraw[black] (3,-1) circle (2pt);
\filldraw[black] (3,-2) circle (2pt);

\filldraw[black] (4,1) circle (2pt)node[anchor=west] {$b$};
\filldraw[black] (4,0) circle (2pt);
\filldraw[black] (4,-1) circle (2pt);
\filldraw[black] (4,-2) circle (2pt)node[anchor=west] {$d$};
\draw (2,1) -- (4,1) ;%steps
\draw (2,0) -- (4,0) ;
\draw (2,-1) -- (4,-1) ;
\draw (2,-2) -- (4,-2) ;%stepsend

\draw[dashed](2,1) -- (2,-3);
\draw[dashed](3,1) --(3,-3);
\draw[dashed](4,1) --(4,-3);
 \begin{scope}[shift={(0
,-4)}]
 \filldraw[black] (2,1) circle (2pt)node[anchor=east] {$h$};
\filldraw[black] (2,0) circle (2pt);
\filldraw[black] (2,-1) circle (2pt);
\filldraw[black] (2,-2) circle (2pt)node[anchor=east] {$f$};
\filldraw[black] (3,1) circle (2pt);
\filldraw[black] (3,0) circle (2pt);
\filldraw[black] (3,-1) circle (2pt);
\filldraw[black] (3,-2) circle (2pt);
\filldraw[black] (4,1) circle (2pt)node[anchor=west] {$g$};
\filldraw[black] (4,0) circle (2pt);
\filldraw[black] (4,-1) circle (2pt);
\filldraw[black] (4,-2) circle (2pt)node[anchor=west] {$e$};
\draw (2,1) -- (4,1) ;%steps
\draw (2,0) -- (4,0) ;
\draw (2,-1) -- (4,-1) ;
\draw (2,-2) -- (4,-2) ;%stepsend
\draw[dashed](2,1) -- (2,-2);
\draw[dashed](3,1) --(3,-2);
\draw[dashed](4,1) --(4,-2);

   \end{scope}

    \end{scope}

\end{tikzpicture}
\end{center}
\caption{The two labelled grids with corners $\{a,b,c,d\}$ and $\{e,f,g,h\}$ of width $3$ and height $4$ can be merged in two different ways, see $W_1$ for {\bf Adjoining} and $W_2$ for {\bf Rotating down}. The two resulting labelled graphs are compatible by the edge $dg$.}
\label{fig:merge}
\end{figure}

\noindent {\bf Multiple merging operation.}
Let  $1 \leq w,h,m $ be positive integers and $W$ be an arbitrary labelled graph. Let $M$ be a set of labelled grids of $W$ that consists of $2m$ labelled grids of equal width $w$ and equal height $h$.  From the pair $(W,M)$ the \textit{multiple merging operation} produces $\binom{m}{\lfloor \frac{m}{2} \rfloor}$ pairwise compatible labelled graphs as follows.

We pair up the $2m$ grids into $m$ ordered pairs and we also fix an arbitrary ordering of these ordered pairs.  We build a family of labelled graphs that is indexed by the subsets of $[m]$ of size $\lfloor \frac{m}{2} \rfloor$. For such a subset $S \subset [m]$, the corresponding labelled graph is formed as follows. We apply a simple merging operation on each pair, for the $i$-th pair: If  $i \in S$ then we merge the $i$-th ordered pair of labelled grids by {\bf Adjoining}. If $i \notin S$, we merge by {\bf Rotating down}.

\noindent{\em Note.} For every ordered pair of labelled grids, when we apply the simple merging operations we always use the same labelling of the corners of the grids with the labels $a,b,c,d,e,f,g,h$. This will be necessary to ensure compatibility of the resulting graphs. 
\medskip

The multiple merging operation indeed produces $\binom{m}{\lfloor \frac{m}{2} \rfloor}$ pairwise compatible labelled graphs, since two such labelled graphs correspond to two different subsets $S_1, S_2$ of $[m]$. If $j \in S_1 \triangle S_1$, then the $j$-th ordered pair is merged in different ways in these labelled graphs. This ensures compatibility as can be seen in Figure \ref{fig:merge}. All the labelled graphs that are built during this operation are isomorphic to each other. 
\medskip

\noindent{\em Note.} Instead of using subsets $S \subset [m]$ that are of size $\lfloor \frac{m}{2} \rfloor$, we can actually use every possible subset in the multiple merging operation. The resulting labelled graphs will still be compatible and we get $2^{m}$ pairwise compatible labelled graphs, but they are not pairwise isomorphic any more. In this paper we only prove asymptotic results and by using every subset we would only gain subexponential factors. We choose to give up these subexponential gains for the property that every labelled graph is isomorphic to each other since this property makes the proofs considerably simpler. In \cite{original} we determined the exact number of pairwise $3$-neighbor separated permutations using similar techniques. In that paper, there is a part of the building process that corresponds to multiple merging operation and there we used every possible subset.
\medskip

The multiple merging operation is a useful tool to produce many pairwise compatible labelled graphs from a single one. But after we apply the multiple merging operation to a pair $(W,M)$, some of the new labelled grids will have their width doubled, and some of them their height doubled. Recall that after we build a suitable family of labelled graphs we wish to use the $Z$-swapping construction. For this we not only need that our labelled graphs are pairwise compatible but that they contain grids of the same width. The purpose of the next operation is exactly this, although instead of having the width of every labelled grid constant, we will only keep the width of the vast majority of the grids constant. 

\medskip
\noindent {\bf Width doubling operation}
Let $W$ be a labelled graph  and let $X$ be an induced subgraph of $W$ that is the disjoint union of labelled grids of the same width $w$ and height $h$. Let $|V(X)|=x$ and $g$ be the number of labelled grids of  $X$, thus $g= \frac{x}{wh}$. The width doubling operation applied to a pair $(W,X)$ produces a family of pairwise isomorphic, pairwise compatible labelled graphs $\mathcal{F}(W,X)$, where each element of  $\mathcal{F}(W,X)$ contains $g/3-O(\log{g})$ labelled grids of width $2w$, and $|\mathcal{F}(W,X)| = 2^{2g/3-O(\log(g))}$. The family $\mathcal{F}(W,X)$ is built as follows.

Let $a_0$ be the largest positive even integer so that $a_0 \leq g$ and let $X_0$ be a subgraph of $X$ that contains $a_0$ labelled grids of width $w$ and height $h$. We apply the {\bf multiple merging operation} on the pair $(W,X_0)$. Let $\mathcal{F}_1$ be the resulting family that contains $ \binom{a_0/2}{\lfloor a_0/4 \rfloor }$ pairwise compatible labelled graphs. The labelled graphs in $\mathcal{F}_1$ are pairwise isomorphic as they are the result of a multiple merging operation. Two types of new labelled grids are formed in the process: $\lfloor a_0/4 \rfloor$ grids of width $w$ and height $2h$ and $\lceil a_0/4 \rceil$ grids of width $2w$ and height $h$.

We proceed by defining families of labelled graphs inductively. Suppose that $\mathcal{F}_i$ is already defined and it contains pairwise compatible pairwise isomorphic labelled graphs and  $|\mathcal{F}_i|=  \prod_{j=0}^{i-1} \binom{a_j/2}{\lfloor a_j/4 \rfloor}$. Suppose that $\{a_j\}_{j=0}^{i}$ are already defined and $a_j$ is the largest positive even number so that $a_j \leq \lfloor a_{j-1}/4 \rfloor$. Also suppose that every labelled graph in $\mathcal{F}_i$ contains at least $\lfloor a_i/4 \rfloor$ labelled grids of width $w$ and height $2^i h$. For each labelled graph $W' \in \mathcal{F}_i$ we will apply a multiple merging operation as follows. Let $a_{i+1}$ be the largest even integer smaller than or equal to $\lfloor a_{i}/4 \rfloor$ and let $X_i=X_i(W')$ be the subgraph of $W'$ that contains $a_{i+1}$ grids of width $w$ and height $2^i h$. We apply the multiple merging operation to the pair $(W',X_i)$ to get the family of labelled graphs $\mathcal{F}_{W'}$,
\[\mathcal{F}_{i+1}:= \bigcup_{W' \in \mathcal{F}_i} \mathcal{F}_{W'}.\]

The family $\mathcal{F}_{i+1}$ consists of $  \prod_{j=0}^{i} \binom{a_j/2}{\lfloor a_j/4 \rfloor}$ pairwise compatible and pairwise isomorphic labelled graphs. Every labelled graph in $\mathcal{F}_{i+1}$ contains at least $\lfloor a_{i+1}/4 \rfloor $ labelled grids of width $w$ and height $2^{i+1}h$. Since $a_1 >a_2 > \ldots $ by definition and each $a_i$ is a positive even number, there must be a last element in the sequence $\{a_1, a_2, \ldots ,a_z\}$. \begin{center}We say that $ \mathcal{F}(W,X):=\mathcal{F}_z$ is the output of the width doubling operation! \end{center}
\medskip

\noindent{\em Note.} At every multiple merging operation, we choose $a_i$ labelled grids of a certain height and width where $a_i$ must be even. If the original number of those labelled grids were odd, there is still a ``leftover'' grid of that dimension in the next family and no further operations use this grid. Therefore after a width doubling operation, there might be a small number (at most $z$) of grids that have their original width. Thus the width doubling operation applied on the pair $(W,X)$ doubles the width of most of the grids on the vertices of the grids in $X$, but there is a small error.

Our main tool for building a large family of pairwise compatible labelled graphs is the width doubling operation. Before we use it, let us prove some key properties of it.

\begin{claim}\label{claim:size}
Let $W$ be a labelled graph and $X$ be a set of labelled grids of $W$ with the same width $w$ and height $h$. If there are $g$ grids in $X$ then the size of $\mathcal{F}(W,X)$ is at least $2^{2g/3-O(\log(g)^2)}$.
\end{claim}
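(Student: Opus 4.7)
The plan is to unfold the construction so that $|\mathcal{F}(W,X)|$ becomes the explicit product $\prod_{j=0}^{z-1} \binom{a_j/2}{\lfloor a_j/4\rfloor}$ already recorded in the definition of the width doubling operation, and then estimate each factor via the standard central binomial bound $\binom{n}{\lfloor n/2\rfloor}\geq 2^n/(n+1)$. Taking logarithms reduces the claim to the two sub-estimates
\[
\sum_{j=0}^{z-1} \frac{a_j}{2} \;\geq\; \frac{2g}{3} - O(\log g)
\qquad \text{and} \qquad
\sum_{j=0}^{z-1} \log_2\!\left(\tfrac{a_j}{2}+1\right) \;=\; O((\log g)^2).
\]

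To obtain these, I would first control the sequence $\{a_j\}$ itself. Since $a_{j+1}$ is the largest positive even integer at most $\lfloor a_j/4\rfloor$ and $a_0 \geq g-1$, one has the uniform two-sided bound $a_j/4 - 2 \leq a_{j+1} \leq a_j/4$. A short induction then gives $a_j \geq g\cdot 4^{-j} - C$ for an absolute constant $C$, because the accumulated additive error along a geometric series of ratio $1/4$ stays bounded. The recursion terminates once $\lfloor a_j/4\rfloor < 2$, which happens after $z = O(\log g)$ steps.

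The two desired estimates then drop out: for the first, $\sum_{j\geq 0} g/(2\cdot 4^j) = 2g/3$, and the $O(1)$ per-step error times $z = O(\log g)$ contributes the $O(\log g)$ correction; for the second, each term is at most $\log_2 g$ and there are $z = O(\log g)$ of them, giving $O((\log g)^2)$ in total. Combining these with the central binomial bound yields $\log_2|\mathcal{F}(W,X)| \geq 2g/3 - O((\log g)^2)$, which is exactly the claim.

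I do not anticipate a conceptual obstacle; the whole argument is just summing a geometric series while tracking uniformly bounded additive errors and one logarithmic factor per step. The only bit requiring any care is the off-by-constant bookkeeping in the ``largest even integer at most $\lfloor a_j/4\rfloor$'' step, together with the verification that the depth $z$ is indeed $O(\log g)$, so that the accumulated logarithmic errors stay within $O((\log g)^2)$.
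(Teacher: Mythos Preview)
Your proposal is correct and follows essentially the same approach as the paper's own proof: both express $|\mathcal{F}(W,X)|$ as the product $\prod_j \binom{a_j/2}{\lfloor a_j/4\rfloor}$, bound each $a_j$ by $g\cdot 4^{-j}$ up to an additive constant, note that the depth $z$ is $O(\log g)$, and sum the resulting geometric series. The only cosmetic difference is that the paper uses the Stirling bound $\binom{N}{\lfloor N/2\rfloor}\geq c\,2^N/\sqrt{N}$ while you use the more elementary $\binom{N}{\lfloor N/2\rfloor}\geq 2^N/(N+1)$; both yield an $O((\log g)^2)$ error term, so the arguments are interchangeable.
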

\begin{proof}
By definition we have  $g-1 \leq a_0 \leq g$ and for any $1 \leq i \leq z$ , we have  $\frac{a_{i-1}}{4}-1 \leq a_i \leq \frac{a_{i-1}}{4}$. From this it follows that $g/4^i -3 \leq  a_i \leq g/4^i$ and  $ \log_{4}(g)-1  \leq z \leq \log_{4}(g)$.
By Stirling's approximation there is a constant $c$ such that for every integer $N$
\begin{equation} \label{binomlower}
c \frac{2^N}{\sqrt{N}} \leq \binom{N}{\lfloor N/2 \rfloor}.
\end{equation}
Now we get a lower bound for $|\mathcal{F}(W,X)|$ using (\ref{binomlower}) and the lower bound for $a_i$ as follows
\[|\mathcal{F}(W,X)|= \prod_{i=0}^z \binom{a_i/2}{\lfloor \frac{a_i}{4} \rfloor} \geq \prod_{i=0}^z c \frac{2^{a_i/2}}{\sqrt{a_i/2}} \geq  \prod_{i=0}^{\log_4(g)-1} 2^{g/(2 \cdot  4^{i}) - \log_2{c}-\log_2{g}}. \]
From this it is an easy task to conclude that
$$|\mathcal{F}(W,X)|\geq 2^{2g/3-O(\log^2{g})}$$ as claimed.
\end{proof}

\begin{claim}\label{claim:gridnum}
Let $W$ be a labelled graph and $X$ a set of $g$ labelled grids of $W$ with the same width $w$ and height $h$. Every labelled graph in $\mathcal{F}(W,X)$ contains at least $g/3-O(\log{g})$ labelled grids of width $2w$ on the vertices of the grids in $X$.
\end{claim}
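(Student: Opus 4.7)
The plan is to account for the width-$2w$ labelled grids produced at each iteration of the width doubling operation and sum the contributions. At the $i$-th iteration the multiple merging operation is applied to $a_i$ grids of width $w$ and height $2^i h$, pairing them into $a_i/2$ ordered pairs; about half of these pairs are merged via \textbf{Adjoining}, and each such pair yields one new labelled grid of width $2w$. So the $i$-th iteration contributes at least $\lfloor a_i/4 \rfloor$ width-$2w$ grids. The key structural point I would verify first is that these grids survive: every later iteration only operates on grids of width $w$ (of a specified height), so the width-$2w$ grids created at step $i$ are not touched by any subsequent step and therefore remain intact in every member of $\mathcal{F}(W,X)$. By construction each such grid sits on vertices of the two grids of $X$ that were merged to produce it.

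Granting this survival observation, every labelled graph in $\mathcal{F}(W,X)$ contains at least $\sum_{i=0}^{z} \lfloor a_i/4 \rfloor$ labelled grids of width $2w$ on vertices of $X$. The estimates $g/4^i - 3 \le a_i \le g/4^i$ and $z \le \log_4(g)$ from the proof of Claim \ref{claim:size} then reduce matters to a routine geometric sum:
\[
\sum_{i=0}^{z} \lfloor a_i/4 \rfloor \;\ge\; \sum_{i=0}^{z} \frac{g}{4^{i+1}} \;-\; O(\log g) \;=\; \frac{g}{3}\bigl(1 - 4^{-(z+1)}\bigr) - O(\log g).
\]
The condition that terminates the recursion ($\lfloor a_z/4 \rfloor$ admits no positive even value) forces $4^z$ to be at least a constant fraction of $g$, so the trailing tail $g/(3 \cdot 4^{z+1})$ is bounded by an absolute constant, and the right-hand side simplifies to $g/3 - O(\log g)$.

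There is no genuine obstacle in this argument; the only subtlety worth tracking is the accounting of rounding losses. Each of the $z = O(\log g)$ iterations gives up at most a constant through the floor in $\lfloor a_i/4 \rfloor$ and through the $-3$ slack in $a_i \ge g/4^i - 3$, and these constant losses across logarithmically many steps aggregate into precisely the advertised $O(\log g)$ error rather than into an $O(1)$ constant.
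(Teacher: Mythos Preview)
Your argument is correct and follows essentially the same route as the paper's proof: count the width-$2w$ grids produced at each multiple merging step (roughly $a_i/4$ of them, surviving all later steps since those only touch width-$w$ grids), and sum the resulting geometric series using the estimates $a_i \approx g/4^i$ and $z = O(\log g)$ to obtain $g/3 - O(\log g)$. The only cosmetic differences are that the paper writes $\lceil a_i/4\rceil$ rather than $\lfloor a_i/4\rfloor$ and sums over $0\le i\le z-1$ rather than $0\le i\le z$; both discrepancies are absorbed by the $O(\log g)$ error term, and your handling of the geometric tail is in fact more careful than the paper's.
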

\begin{proof}
At the $i$-th merging operation $(0 \leq i \leq z-1)$ exactly $\lceil \frac{a_i}{4} \rceil$ labelled grids of width $2w$ were created, and we did not change these grids after their creation. Thus the number of grids of width $2w$ is
$$\sum_{i=0}^{z-1} \left\lceil \frac{a_i}{4} 	\right\rceil \geq \sum_{i=0}^{\log_4{g}-2} \frac{g}{4^{i+1}}-4 = g/3-O(\log{g}). $$
\end{proof}

\begin{claim}\label{claim:leftover} Let $W$ be a labelled graph and $X$ a set of $g$ labelled grids of $W$ with the same width $w$ and height $h$. For every labelled graph $W' \in \mathcal{F}(W,X)$, the number of vertices in $W'$ that are contained in labelled grids of width $w$ (``leftover grids'') and that are contained in the grids of $X$ in the original labelled graph $W$ is at most $O(wh \sqrt{g})$.
\end{claim}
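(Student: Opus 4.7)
My plan is to track, stage by stage of the width doubling operation, the width-$w$ labelled grids that fail to participate in a subsequent merging, and to bound the total vertex count of these ``leftovers'' by a geometric series.

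First I would unwind the inductive definition. On the vertex set of $X$, for each $i \in \{0,1,\dots,z\}$ the width-$w$ grids present in $W' \in \mathcal{F}_i$ all have height $2^i h$: they are the ones produced by the {\bf Rotating down} half of the $(i-1)$-st multiple merging (with the initial convention that the $g$ original grids of $X$ are the generation $i=0$ objects of height $h$). From these, the next merging selects exactly $a_{i+1}$ grids to pair up, where $a_{i+1}$ is the largest positive even integer $\le \lfloor a_i/4 \rfloor$ (and $a_0$ plays the analogous role with respect to $g$). Every width-$w$ grid of height $2^i h$ not chosen at this step is a leftover and is never modified afterwards.

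Next I would bound the number of leftover grids at each generation. At generation $0$ there are $g-a_0 \le 1$ leftovers, each with $wh$ vertices. At generation $1 \le i \le z-1$, the number of width-$w$ grids available is at most $\lceil a_i/4 \rceil$, and we use $a_{i+1}$ of them; since $\lceil a_i/4 \rceil - \lfloor a_i/4\rfloor \le 1$ and $\lfloor a_i/4\rfloor - a_{i+1} \le 1$ (the latter by definition of $a_{i+1}$), at most $2$ grids are left over at this generation. Finally, at generation $z$, all $O(1)$ width-$w$ grids produced by the last merging remain unused, since $a_z$ is bounded (the process halts precisely when $\lfloor a_z/4\rfloor < 2$).

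Now the vertex count is straightforward: each leftover grid at generation $i$ contributes $w \cdot 2^i h$ vertices, so the total leftover vertex count is at most
\[
\sum_{i=0}^{z} O(1) \cdot w \cdot 2^i h \;=\; O(wh)\cdot 2^{z+1}.
\]
Finally I would invoke the already-observed bound $z \le \log_4 g$ (from Claim~\ref{claim:size}), which gives $2^z \le \sqrt{g}$, yielding the claimed $O(wh\sqrt{g})$.

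The main obstacle is really only careful bookkeeping. One must not forget the three distinct sources of leftover grids — the parity slack $g - a_0$ at the start, the $O(1)$ slack at each intermediate stage coming from the definition of $a_{i+1}$ as the largest \emph{even} integer with $a_{i+1}\le \lfloor a_i/4\rfloor$, and the entire last generation of rotated-down grids which have no chance to be merged further — and one must track that each leftover at generation $i$ has $2^i$ times as many vertices as one at generation $0$, so it is the \emph{last} generations that dominate. Once this is set up, the geometric series collapses to $2^z \le \sqrt{g}$ and the claim follows.
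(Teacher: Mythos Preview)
Your proposal is correct and follows essentially the same argument as the paper: both observe that each stage of the width doubling operation leaves behind $O(1)$ width-$w$ grids on the $X$-vertices, that a leftover at stage $i$ has $wh\cdot 2^i$ vertices, and that summing this geometric series over $i\le z\le \log_4 g$ gives $O(wh\sqrt{g})$. Your bookkeeping is in fact slightly more careful than the paper's (you separately account for the initial parity slack $g-a_0$ and for the rotated-down grids produced by the final merging, which the paper's phrasing glosses over), but the substance is identical. One small indexing slip: where you write ``available is at most $\lceil a_i/4\rceil$'' you mean $\lceil a_{i-1}/4\rceil$, though this does not affect the $O(1)$ bound.
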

\begin{proof}
In a labelled graph of $\mathcal{F}(W,X)$ those labelled grids of width $w$ that are on the vertices of the grids in $X$ are all formed in the following way. We had to chose an even number of grids of the same width and height for a  multiple merging operation, but there was an odd number of these grids.  In these cases the grid that was not used in the multiple merging operation had height $2^i h$ and width $w$ for some $(1 \leq i \leq z)$. Hence the number of vertices in such a grid is $hw 2^i$. At every multiple merging operation there was at most one such grid, thus the number of vertices in all of these grids is at most
$$ \sum_{i=1}^{\log_4 (g)}wh 2^i =wh O(\sqrt{g}). $$
\end{proof}

\subsection{The lower bound for $K=1$}

Now we are ready to prove Theorem~\ref{thm:main}. First we present the proof for $\ell=1$, then we proceed by induction. The rough structure of the proof can be seen in Figure \ref{fig:main_idea}. We note that the $\ell=1$ case follows from the main result of \cite{original}, however on one hand we need a different construction for the induction and on the other hand, here the construction is much simpler since we only prove an asymptotic result.

\bigskip

\noindent \textit{Proof of Theorem~\ref{thm:main} for k=2.} Let $\mathcal{F}_0$ be the family that contains only the empty (labelled) graph $W_0$ on $n$ vertices. We think of $W_0$ as a labelled graph that contains an isolated vertex $v$ and $n-1$ labelled grids of width $1$ and height $1$. Our aim is to produce many labelled graphs with labelled grids of width $2$ so we wish to double their width. Let $X$ be a subset of the vertices of $W_0$ of size $n-1$. Now let us use the width doubling operation on the pair $(W_0,X)$ to get the family of labelled graphs $\mathcal{F}=\mathcal{F}(W_0,X)$. By Claim \ref{claim:leftover} every labelled graph in $\mathcal{F}$ contains at most $O(\sqrt{n})$ vertices that are in labelled grids of width $1$. Therefore adding at most  $O(\sqrt{n})$ new vertices every labelled graph in $\mathcal{F}$ can be completed to a $2$-labelled graph by adding only additional edges to widen the grids. Let us denote the resulting family of $2$-labelled graphs by $\mathcal{F}^1$. By Claim \ref{claim:size}

\[ |\mathcal{F}^1| \geq 2^{2n/3-O(\log(n)^2)}. \]

We apply the Z-swapping construction to $\mathcal{F}^1$. From each labelled graph in $\mathcal{F}^1$ we get $2^{n/3-O(\log{n})}$ Hamiltonian paths by Claim \ref{hamiltonsize} and Claim \ref{claim:gridnum}. Therefore altogether we get $2^{n-O((\log{n})}$ Hamiltonian paths on $n+O(\sqrt{n})$ vertices and the proof is complete.
\hfill\(\qedsymbol\)

\subsection{The lower bound for $k=2^\ell $}

\begin{figure}
\begin{center}
\begin{tikzpicture}[scale=0.925]

\begin{scope}[shift={(0,5)}]
	 \begin{scope}[shift={(6.3,0)}]
\node() at (2,2.3){$\mathcal{F}^0$};
\draw (0,0) rectangle (4,2);
%\draw (2,1) ellipse (1.8cm and 0.75cm);
%\node() at (3.7,1.6){$W_0$};
    \node() at (2,1.5){Contains only $W_0$};
    \node() at (2,1){the edgeless graph};
    \node() at (2,0.5){on $n$ vertices.};
\end{scope}
	\draw[ultra thick,->] (7,-0.5) -- (7,-2.6);
	\node() at (11.5,-1.5){One {\bf width doubling operation.}};
	
\end{scope}

%\draw[ultra thick,->] (4.1,2) to[out=40,in=140] (6.2,2);
%    \node() at (5.3,3.2){A sequence of};
%    \node() at (5.3,2.7){merging operations.};
\begin{scope}[shift={(0,0)}]
  \begin{scope}[shift={(6.3,0)}]
    \node() at (2,2.3){$\mathcal{F}^* = \mathcal{F}^1$};
\draw (0,0) rectangle (4,2);
    \node() at (2,1.5){Many, pairwise};
    \node() at (2,1){compatible};
    \node() at (2,0.5){2-labelled graphs.};

\draw[ultra thick,->] (4.1,1) -- (8.5,1);
    \node() at (6.4,1.9){The $Z$-swapping};
    \node() at (6.4,1.4){construction.};

    \end{scope}

      \begin{scope}[shift={(15,0)}]
    \node() at (2,2.3){$\mathcal{H}^1$};
\draw (0,0) rectangle (4,2);
    \node() at (2,1.5){$2^{n-o(n)}$ 3-neighbor };
    \node() at (2,1){separated};
	\node() at (2,0.5){Hamiltonian paths};
    \end{scope}
	\draw[ultra thick,->] (7,-0.5) -- (7,-2.6);
	\node() at (11.5,-1.5){$O(\log{n})$ {\bf width doubling operations.}};
\end{scope}

\begin{scope}[shift={(0,-5)}]
  \begin{scope}[shift={(6.3,0)}]
    \node() at (2,2.3){$\mathcal{F}^2$};
\draw (0,0) rectangle (4,2);
    \node() at (2,1.5){Many, pairwise};
    \node() at (2,1){compatible};
    \node() at (2,0.5){4-labelled graphs.};

\draw[ultra thick,->] (4.1,1) -- (8.5,1);
    \node() at (6.4,1.9){The $Z$-swapping};
    \node() at (6.4,1.4){construction.};

    \end{scope}

      \begin{scope}[shift={(15,0)}]
    \node() at (2,2.3){$\mathcal{H}^2$};
\draw (0,0) rectangle (4,2);
    \node() at (2,1.5){$2^{n-o(n)}$ 5-neighbor };
    \node() at (2,1){separated};
	\node() at (2,0.5){Hamiltonian paths};
    \end{scope}
	\draw[ultra thick,->] (7,-0.5) -- (7,-2.6);
	\node() at (11.5,-1.5){$O(\log^2{n})$ {\bf width doubling operations.}};
\end{scope}

\node() at (7,-9){$\ldots$};

	\draw[ultra thick,->] (7,-10.5) -- (7,-12.6);
	\node() at (11.5,-11.5){$O(\log^K{n})$ {\bf width doubling operations.}};

\begin{scope}[shift={(0,-15)}]
  \begin{scope}[shift={(6.3,0)}]
    \node() at (2,2.3){$\mathcal{F}^K$};
\draw (0,0) rectangle (4,2);
    \node() at (2,1.5){Many, pairwise};
    \node() at (2,1){compatible};
    \node() at (2,0.5){$2^\ell$-labelled graphs.};

\draw[ultra thick,->] (4.1,1) -- (8.5,1);
    \node() at (6.4,1.9){The $Z$-swapping};
    \node() at (6.4,1.4){construction.};

    \end{scope}

      \begin{scope}[shift={(15,0)}]
    \node() at (2,2.3){$\mathcal{H}^K$};
\draw (-0.1,-0.1) rectangle (4.1,2.1);
    \node() at (2,1.5){$2^{n-o(n)}$ $2^\ell$+1-neighbor };
    \node() at (2,1){separated};
	\node() at (2,0.5){Hamiltonian paths};
    \end{scope}
\end{scope}

\end{tikzpicture}
\end{center}
\caption{The induction steps.}
\label{fig:main_idea}
\end{figure}
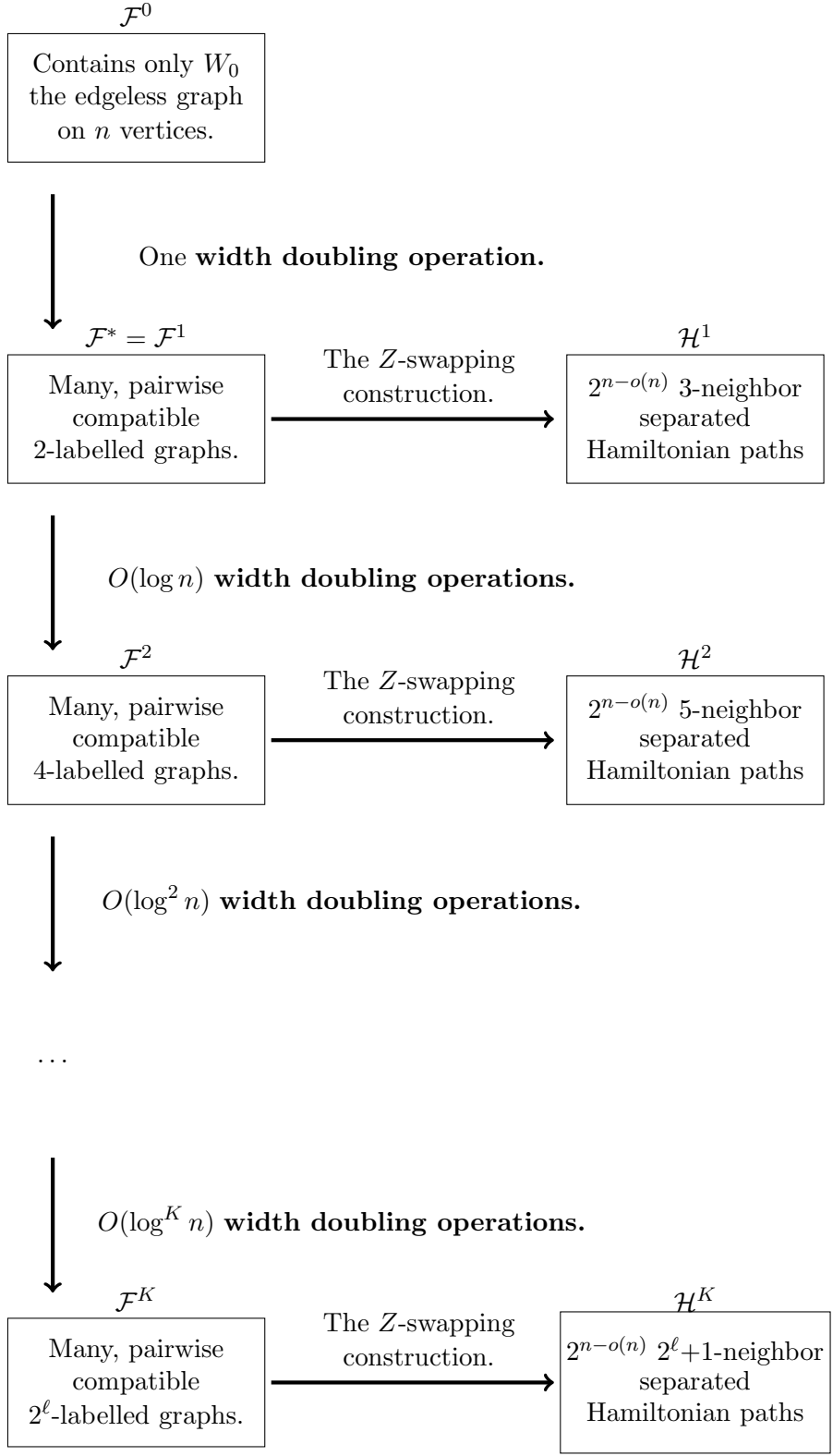

To prove Theorem~\ref{thm:main} for $\ell = 1$ we used one  width doubling operation, for general $\ell$ we will use many, see Figure \ref{fig:main_idea}. To simplify the proof we introduce the complete width doubling operation.
\medskip

\noindent {\bf Complete width doubling operation} Let $W$ be a labelled graph . The complete width doubling operation applied to $W$ produces a family of pairwise isomorphic and pairwise compatible labelled graphs $\mathcal{C}(W)$ as follows. We partition the grids of $W$ according to their dimensions (width and height). Let the classes of the partition be $P_{1},P_{2}, \ldots P_{m}$. Now let $\mathcal{C}^0:= \{W\}$ and for each $1 \leq i \leq m $ let

\[ \mathcal{C}^i:= \bigcup_{W_j \in \mathcal{C}^{i-1}} \mathcal{F}(W_j,P_i) \]
then $\mathcal{C}(W):=\mathcal{C}^{m}.$

We will also use the following claim which intuitively says that a width doubling operation does not produce too many new different shapes of grids.

\begin{claim}\label{claim:shapes}
Let $W$ be a labelled graph and $X$ a set of labelled grids in $W$ of the same width and height and $|X|=g$. In a labelled graph in $\mathcal{F}(W,X)$ the number of different shapes of grids on the vertices of $X$ is at most $O(\log{g})$.
\end{claim}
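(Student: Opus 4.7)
The plan is to read off the answer directly from the inductive definition of the width doubling operation, by tracking how new grid shapes are introduced at each round.

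First I would recall the structure: the width doubling operation applied to $(W,X)$ proceeds through rounds $i=0,1,\ldots,z$, and we established earlier that $z \leq \log_4 g$. At the start of round $i$, every grid in question has width $w$ and height $2^i h$. The multiple merging operation of round $i$ then partitions these into pairs and applies either \textbf{Adjoining} or \textbf{Rotating down} to each pair, and possibly leaves one grid unmatched because we insisted on an even number $a_i$.

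Next I would observe that each round $i$ introduces at most three new grid shapes on the vertices of $X$:
(i) grids of width $2w$ and height $2^i h$, arising from \textbf{Adjoining};
(ii) grids of width $w$ and height $2^{i+1} h$, arising from \textbf{Rotating down} (these become the input to round $i+1$);
(iii) at most one leftover grid of width $w$ and height $2^i h$ that was not paired up.
Crucially, once a grid has been created by \textbf{Adjoining} in some round, it is never touched again by any subsequent round, since later rounds operate only on the tall $w\times 2^{i+1}h$ grids produced by \textbf{Rotating down}. So the shape $(2w, 2^i h)$ is only ever introduced in round $i$, and each such shape can appear only once across the entire procedure.

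Finally I would sum: the total number of distinct grid shapes on the vertices of $X$ in any $W' \in \mathcal{F}(W,X)$ is at most $3(z+1) \leq 3\log_4 g + 3 = O(\log g)$, which is exactly the claimed bound. I do not expect any obstacle here — the statement is essentially a bookkeeping consequence of the recursive construction, and the only point to be a little careful about is to remember the leftover grids contributed by odd parities (these give shape (iii) and keep the constant factor at $3$ rather than $2$).
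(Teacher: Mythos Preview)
Your argument is correct and follows the same approach as the paper's proof, which simply observes that each multiple merging operation increases the number of shapes by at most two and that $O(\log g)$ such operations are performed. Your constant $3$ is slightly loose because your shape (iii), the leftover $(w,2^ih)$ grid, is not actually a \emph{new} shape in round $i$ --- it is the input shape to that round, already accounted for as shape (ii) of round $i-1$ --- but this of course does not affect the $O(\log g)$ conclusion.
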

\begin{proof}
After every multiple merging operation the number of shapes in a labelled graph increases by at most two and we used $O(\log{g})$ multiple merging operations.
\end{proof}

\begin{claim} \label{claim:height}
Suppose that we are applying the width doubling operation to a pair $(W,X)$ where $X$ consists of labelled grids of height $\alpha$. Then in the resulting family  $\mathcal{F}(W,X)$ of labelled graphs the height of the grids on the vertices of $X$ is at most $\alpha 2^{\log_{4}{n/\alpha}}$.
\end{claim}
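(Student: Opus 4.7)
The plan is to trace what happens to the heights of grids through the rounds of the width doubling operation, bound the number of rounds in terms of $n/\alpha$, and then combine these two bounds.

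Recall that the width doubling operation applied to $(W,X)$ is executed in $z+1$ rounds, producing the families $\mathcal{F}_0 = \{W\}, \mathcal{F}_1, \ldots, \mathcal{F}_z = \mathcal{F}(W,X)$, where each round is a multiple merging operation. A multiple merging operation uses only two basic moves on the chosen grids: \textbf{Adjoining} (which leaves the height unchanged and doubles the width) and \textbf{Rotating down} (which doubles the height and leaves the width unchanged). Thus, in a single round the height of every grid is at most doubled, and every grid appearing in a labelled graph of $\mathcal{F}_i$ that sits on the vertices of $X$ has height at most $\alpha \cdot 2^{i}$. Grids that are not selected for the multiple merging in round $i$ keep their height as well, which is still bounded by $\alpha \cdot 2^i$ by induction. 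Consequently, in every labelled graph of $\mathcal{F}(W,X) = \mathcal{F}_z$ the height of every grid located on the vertices of $X$ is at most $\alpha \cdot 2^{z}$.

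It remains to bound $z$ in terms of $n/\alpha$. By the construction of the width doubling operation, the sequence $a_0, a_1, \ldots, a_z$ of positive even integers satisfies $a_i \leq a_{i-1}/4$ for every $1 \leq i \leq z$, so $a_z \leq g/4^{z}$, and since $a_z \geq 2$ we obtain $z \leq \log_{4}(g/2) \leq \log_{4} g$. Finally, since every grid in $X$ contains at least $\alpha$ vertices (the width is at least one and the height equals $\alpha$), we have $g \leq |V(X)|/\alpha \leq n/\alpha$, and therefore $z \leq \log_{4}(n/\alpha)$. Plugging this into the height bound gives
\[
\alpha \cdot 2^{z} \leq \alpha \cdot 2^{\log_{4}(n/\alpha)},
\]
which is exactly the claimed bound. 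The argument is essentially a bookkeeping exercise; the only subtlety is to observe that although the adjoining move keeps the height fixed, it is the rotate-down move that controls the worst case, and both moves are applied at most $z$ times in total along any chain of descendants of an original grid in $X$.
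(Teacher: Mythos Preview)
Your proof is correct and follows essentially the same approach as the paper: bound the number of multiple merging operations by $\log_4(n/\alpha)$ (using that $g \le n/\alpha$ since each grid in $X$ has at least $\alpha$ vertices), and observe that each merging operation at most doubles the height. The paper states these two observations more tersely, while you make the bound on $z$ explicit via the inequality $a_z \le g/4^z$; the minor slip of calling it ``$z+1$ rounds'' (there are $z$ merging steps from $\mathcal{F}_0$ to $\mathcal{F}_z$) does not affect the argument since you correctly use $2^z$ in the height bound.
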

\begin{proof}
The number of grids of height $\alpha$ is obviously at most $n/\alpha$. Therefore during the width doubling operation we applied at most $\log_{4}(n/\alpha)$ multiple merging operations. After a multiple merging operation the height of the resulting grids is at most twice the height of the original ones.
\end{proof}

\noindent \textit{Proof of Theorem~\ref{thm:main} for every  $\ell$.}

Let $W_0$ be the empty graph on $n$ vertices and let us apply the complete width doubling operation $\ell$-times to $W_0$. Let the resulting series of families of labelled graphs be $\mathcal{F}^1,\mathcal{F}^2, \ldots ,\mathcal{F}^\ell$. We wish to transform the labelled graphs in $\mathcal{F}^\ell$ into $2^\ell$-labelled graphs. We show that this can be done by adding only $o(n)$ additional vertices and widening every labelled grid using these.

\begin{claim} \label{claim:littleon}
The number of vertices in a labelled graph in $\mathcal{F}^\ell$ which do not belong to a labelled grid of width $2^\ell$ is $o(n)$.
\end{claim}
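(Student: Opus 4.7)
The plan is to induct on $\ell$. The base case $\ell = 1$ is precisely the content of the proof of Theorem~\ref{thm:main} for $k=2$ given above: starting from $W_0$ (viewed as containing $n$ grids of width and height $1$), a single complete width doubling leaves only $O(\sqrt{n}) = o(n)$ vertices in leftover grids of width $1$, by Claim~\ref{claim:leftover}.

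For the inductive step, assume the claim holds for $\ell-1$. Pick $W \in \mathcal{F}^\ell$ and a predecessor $W' \in \mathcal{F}^{\ell-1}$ from which $W$ was produced by the $\ell$-th complete width doubling. I would classify each vertex $v$ of $W$ that is \emph{not} contained in a grid of width $2^\ell$ into two types: (a) those whose grid in $W'$ already had width strictly less than $2^{\ell-1}$, and (b) those whose grid in $W'$ had width exactly $2^{\ell-1}$ but which became a ``new leftover'' during the $\ell$-th complete width doubling. Since all widths that occur in the construction are powers of $2$ and a complete width doubling at most doubles any single grid's width, vertices of type (a) end up in a grid of width at most $2 \cdot 2^{\ell-2} < 2^\ell$ in $W$, so the inductive hypothesis applied to $W'$ bounds their count by $o(n)$.

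To control type (b), I would apply Claim~\ref{claim:leftover} to each class of width-$2^{\ell-1}$ grids in $W'$: if there are $m$ such classes and the $i$-th consists of $g_i$ grids of height $h_i$, the total new leftover is at most $\sum_i O(2^{\ell-1} h_i \sqrt{g_i})$. Applying Cauchy--Schwarz to $\sum_i h_i \sqrt{g_i} = \sum_i \sqrt{h_i}\cdot\sqrt{h_i g_i}$ together with the vertex bound $\sum_i g_i h_i \le n/2^{\ell-1}$ yields an estimate of order $\sqrt{2^{\ell-1} \, m \, H_\ell \, n}$, where $H_\ell$ bounds the heights of grids in $W'$. Iterating Claim~\ref{claim:shapes} across the first $\ell-1$ complete width doublings gives $m = O((\log n)^{\ell-1})$, and iterating Claim~\ref{claim:height} (which sends height $\alpha$ to at most $\sqrt{n\alpha}$) gives $H_\ell \le n^{1 - 2^{-(\ell-1)}}$. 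Substituting, the new leftover is of order $n^{1 - 2^{-\ell}} (\log n)^{O(\ell)} = o(n)$ for fixed $\ell$, completing the induction.

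The main obstacle is to confirm that the bounds from Claims~\ref{claim:shapes} and~\ref{claim:height} compound cleanly enough not to lose a factor linear in $n$; the decisive point is that the square-root behaviour in Claim~\ref{claim:height} increases the height exponent only from $1 - 2^{-k}$ to $1 - 2^{-k-1}$ per iteration, so for each fixed $\ell$ the maximum grid height remains strictly sub-linear. This is also precisely the reason the argument gives $o(n)$ only for $\ell$ fixed as $n \to \infty$, and does not directly extend to $\ell$ growing with $n$.
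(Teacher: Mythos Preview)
Your proof is correct, and it rests on the same two iterations as the paper's argument: the shape count $O((\log n)^{\ell})$ obtained by compounding Claim~\ref{claim:shapes} across complete width doublings, and the height bound $H_j \le n^{1-2^{-j}}$ obtained by iterating Claim~\ref{claim:height}. Where you differ is in how these are combined. The paper does not use induction, Claim~\ref{claim:leftover}, or Cauchy--Schwarz at all; instead it makes a structural observation: any grid in $\mathcal{F}^\ell$ of width below $2^\ell$ arose as the single unused grid in some multiple merging step, so there is at most one grid of each such shape. Hence the total number of bad vertices is bounded simply by (number of shapes) $\times$ (maximum vertices in a single grid) $= O((\log n)^\ell) \cdot O(2^\ell n^{1-2^{-\ell}}) = o(n)$.

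Your inductive approach trades that structural observation for the quantitative bound of Claim~\ref{claim:leftover} summed over classes, which you then control with Cauchy--Schwarz. This is a perfectly valid alternative route and is arguably more robust: it does not depend on tracking whether leftover grids from distinct width doublings could ever share a shape and subsequently interact, whereas the paper's ``one per shape'' claim implicitly assumes that once a grid becomes a leftover it is never merged again. In exchange, your argument is a little longer and introduces one extra tool (Cauchy--Schwarz) that the paper does not need. Both routes yield the same final estimate $n^{1-2^{-\ell}}(\log n)^{O(\ell)}$, and both make transparent that the bound is $o(n)$ only for fixed $\ell$.
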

\begin{proof}
By Claim \ref{claim:shapes} the number of different shapes of grids in $\mathcal{F}^1$ is at most $O(\log{n})$. Since a complete width doubling operation is a width doubling operation on every different shape, in  $\mathcal{F}^i$ the number of different shapes of grids is at most $O(\log{n}^i)$. Every grid in $\mathcal{F}^\ell$ that has width less than $2^\ell$, must be a ``leftover grid'' in some of the many width doubling operations during the complete width doubling operations that resulted in $\mathcal{F}^\ell$ therefore there is a single grid with this shape. Thus it is enough to bound the number of vertices in a single grid in $\mathcal{F}^\ell$.

The width of all the grids in $\mathcal{F}^\ell$ is bounded by a constant $2^\ell$, the height of the grids can be bounded by Claim \ref{claim:height} as follows.

In $\mathcal{F}^0$ the maximal height of a grid is $1$. If the maximal height of a grid in $\mathcal{F}^j$ is $\alpha=O(n^{1-\varepsilon})$ for a constant $\varepsilon=\varepsilon(j)$  then the maximal height of a grid in $\mathcal{F}^{j+1}$ is at most $\alpha 2^{\log_{4}(n/\alpha)}=\alpha \sqrt{n/\alpha}= \alpha^{1/2} n^{1/2}=O(n^{1-\varepsilon/2})$. Therefore setting $\varepsilon(0)=1$ and  $\varepsilon(j+1)=\varepsilon(j)/2$, by induction we have that in  $\mathcal{F}^\ell$ the height of the grids is at most $n^{1-\varepsilon}$ for a fixed epsilon. Therefore the total number of vertices in grids with width smaller than $2^\ell$ is at most $O(2^\ell n^{1-\varepsilon}  \log{n}^\ell) = o(n)$ as claimed.
\end{proof}

By Claim \ref{claim:littleon} with the addition of at most $o(n)$ new vertices we can build a family of $2^K$-labelled graphs from $\mathcal{F}^\ell$, let us call this new family $\mathcal{F}_{final}$.

\begin{claim}
Applying the $Z$-swapping construction to $\mathcal{F}_{final}$ we get $2^{n-o(n)}$ Hamiltonian paths.
\end{claim}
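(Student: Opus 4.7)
The plan is to count the Hamiltonian paths as $|\mathcal{F}_{final}|$ times the number of paths produced per labelled graph by the $Z$-swapping construction, and to estimate each factor by induction on the number of completed width doubling rounds. Since $\mathcal{F}_{final}$ is obtained from $\mathcal{F}^{\ell}$ only by adding $o(n)$ new vertices to widen the leftover grids, we have $|\mathcal{F}_{final}| = |\mathcal{F}^{\ell}|$, and every graph in $\mathcal{F}_{final}$ contains at least as many width-$2^{\ell}$ grids as the corresponding element of $\mathcal{F}^{\ell}$. So the task reduces to tracking $|\mathcal{F}^i|$ and a lower bound $g_i$ on the number of width-$2^i$ grids in a graph of $\mathcal{F}^i$, with $g_0 = n-1$.

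Going from $\mathcal{F}^i$ to $\mathcal{F}^{i+1}$ amounts, for each $W \in \mathcal{F}^i$, to a complete width doubling: the width-$2^i$ grids of $W$ are partitioned into shape classes $P_1,\ldots,P_s$ by height, and a width doubling operation is applied to each. Iterating Claim~\ref{claim:shapes} shows that $s$ is polylogarithmic in $n$ (for fixed $i$). Per class, Claim~\ref{claim:size} multiplies the family size by at least $2^{2|P_j|/3 - O(\log^2 n)}$ and Claim~\ref{claim:gridnum} contributes at least $|P_j|/3 - O(\log n)$ new width-$2^{i+1}$ grids. Summing over the $s$ classes gives
\[
|\mathcal{F}^{i+1}| \,\geq\, |\mathcal{F}^{i}|\cdot 2^{2g_i/3 - o(n)}, \qquad g_{i+1} \,\geq\, g_i/3 - o(n).
\]
A straightforward induction yields $g_i = n/3^i - o(n)$, and the geometric sum then gives
\[
|\mathcal{F}^{\ell}| \,\geq\, 2^{(2/3)\sum_{i=0}^{\ell-1}g_i - o(n)} \,=\, 2^{n(1 - 3^{-\ell}) - o(n)}.
\]

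To finish, Claim~\ref{hamiltonsize} says that the $Z$-swapping construction applied to a single $2^{\ell}$-labelled graph with $g_{\ell}$ grids produces $2^{g_{\ell}}$ pairwise $(2^{\ell}+1)$-neighbor separated Hamiltonian paths, and the paragraph preceding the definition of compatibility extends this guarantee across any two compatible labelled graphs. Since compatibility is preserved by every merging operation (these only add new labelled edges and never alter existing ones), the family $\mathcal{F}_{final}$ is pairwise compatible, so all the produced Hamiltonian paths are pairwise distinct and $(2^{\ell}+1)$-neighbor separated. Multiplying,
\[
|\mathcal{F}_{final}|\cdot 2^{g_{\ell}} \,\geq\, 2^{n(1 - 3^{-\ell}) + n\cdot 3^{-\ell} - o(n)} \,=\, 2^{n - o(n)},
\]
which is the desired bound. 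The only delicate point in carrying this out is the error accounting: one must check that the $o(n)$ losses, which at each of the $\ell$ rounds arise from $O(\log^{O(1)} n)$ per shape class times a polylogarithmic number of shape classes, stay $o(n)$ after a constant number of rounds. Since $\ell$ is fixed, this is automatic.
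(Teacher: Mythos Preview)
Your proof is correct and follows essentially the same approach as the paper's. The only difference is packaging: the paper tracks the single quantity $v(\mathcal{F}) := |\mathcal{F}|\cdot 2^{g(\mathcal{F})}$ and shows it drops by at most a factor of $2^{O(\log^2 n)}$ per width doubling operation (hence by $2^{o(n)}$ over the $O(\log^{\ell} n)$ operations in total), while you track $|\mathcal{F}^i|$ and $g_i$ separately and recombine them at the end via the geometric identity $(2/3)\sum_{i=0}^{\ell-1} g_i + g_\ell = n - o(n)$.
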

\begin{proof}
Let $\mathcal{F}$ be a family of labelled graphs which consists of pairwise isomorphic labelled graphs, let $g(\mathcal{F})$ be the number of labelled grids in $\mathcal{F}$. Let the value of such a family of labelled graph be the quantity $v(\mathcal{F}):=|\mathcal{F}|2^{g(\mathcal{F})}$. The value of the family that contains only the empty graph on $n$ vertices is clearly $2^n$. Suppose that we applied a width doubling operation to every graph in a family $\mathcal{F}$ with a set $X$ of size $g$, then by Claim \ref{claim:size} $|\mathcal{F}|$ is increased by at least $2^{2g/3-O(\log{g}^2)}$. But by  Claim \ref{claim:gridnum} the quantity $2^{g(\mathcal{F})}$ is decreased by at most $2^{2g/3+O(\log{g})}$. Therefore the value of $\mathcal{F}$ is decreased by at most $O(\log{g}^2)= O(\log{n}^2)$. Since $\mathcal{F}^\ell$ is the result of at most $O(\log{n}^\ell)$ width doubling operations, the value of $\mathcal{F}^\ell$ is at least $2^{n-O(\log{n}^{\ell+2})}=2^{n-o(n)}$. Since the number of Hamiltonian paths that we get after applying the $Z$-swapping construction to $\mathcal{F}_{final}$ is exactly $v(\mathcal{F}_{final})$ the proof is complete.

\end{proof}

\noindent Since the number of vertices of the labelled graphs in $\mathcal{F}_{final}$ is $n+o(n)$, the proof of Theorem~\ref{thm:main} is complete.

\section{Connection with classical results} \label{sec:classic}

In this short section we discuss how determining $P(n,k)$ and the compatibility of labelled graphs relates to some classical theorems.

\begin{thm}[B. Bollobás  \label{bol}] \cite{bollobas}
Let $\mathcal{H}= \{(A_1,B_1), (A_2,B_2), \ldots , (A_m,B_m)\}$ be a system of pairs where for each $i$ we have $|A_i|=a$, $|B_i| =b$. If $A_i \cap B_j = \emptyset$ if and only if $i=j$ then  
\[|\mathcal{H}| \leq \binom{a+b}{a}.\]
\end{thm}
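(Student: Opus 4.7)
The plan is to prove this via the classical random-permutation argument. Let $X = \bigcup_{i=1}^m (A_i \cup B_i)$ be the ground set and pick a uniformly random linear ordering $\pi$ of $X$. For each $i$, let $E_i$ be the event that every element of $A_i$ precedes every element of $B_i$ under $\pi$. Since $A_i \cap B_i = \emptyset$ (the ``only if'' direction of the hypothesis applied with $i = j$), the set $A_i \cup B_i$ has exactly $a + b$ elements, and by symmetry among relative orderings of these $a+b$ elements we get
\[ \Pr(E_i) = \frac{a!\, b!}{(a+b)!} = \binom{a+b}{a}^{-1}. \]

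The heart of the proof is to show that the events $E_1, \ldots, E_m$ are pairwise disjoint; once this is done, summing the probabilities yields $1 \geq \sum_{i=1}^m \Pr(E_i) = m \binom{a+b}{a}^{-1}$, which gives the desired bound. To establish disjointness, I would suppose for contradiction that $E_i$ and $E_j$ both occur for some $i \neq j$. Without loss of generality the last element of $A_j$ (under $\pi$) precedes the last element of $A_i$. The ``if'' direction of the hypothesis, applied to the pair $(j, i)$, gives that $A_j \cap B_i \neq \emptyset$, so pick some $y \in A_j \cap B_i$.

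Now derive a contradiction by locating $y$ in the ordering two different ways: since $y \in A_j$, it appears no later than the last element of $A_j$; since $y \in B_i$ and $E_i$ holds, it appears strictly after every element of $A_i$, in particular after the last element of $A_i$. Combined with our assumption that the last of $A_j$ precedes the last of $A_i$, this is inconsistent. I expect this disjointness argument to be the main subtlety, since it is exactly the place where the hypothesis $A_i \cap B_j \neq \emptyset$ for $i \neq j$ is used, and getting the inequalities to point the right way requires the WLOG reduction on the last elements of $A_i$ and $A_j$. Once disjointness is in hand, the bound $m \leq \binom{a+b}{a}$ is immediate from additivity of probability.
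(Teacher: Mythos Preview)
The paper does not actually prove this theorem: it is quoted as a classical result of Bollob\'as (with a citation) and only its sharpness is remarked upon, so there is nothing in the paper to compare your argument against. Your random-permutation argument is correct and is one of the standard modern proofs of the Bollob\'as set-pair inequality. One microscopic wording point: in the disjointness step you should allow the last element of $A_j$ to weakly precede (i.e., coincide with) the last element of $A_i$, since $A_i$ and $A_j$ may share that element; with ``precedes'' read as $\leq$ in position, your chain of inequalities $\ell_i < \pi(y) \le \ell_j \le \ell_i$ goes through unchanged.
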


The system of pairs $\mathcal{H}$ satisfying the conditions of Theorem \ref{bol} is called cross-intersecting. Note that the size of the ground set is not specified. Theorem \ref{bol} is sharp since we can take all possible partitions of $[a+b]$into two sets of size $a$ and $b$. Theorem \ref{bol} has numerous variants, for our purposes the following one is useful.

\begin{thm}[Zs. Tuza \label{tuz}]  \cite{tuza}
Let $\mathcal{H}= \{(A_1,B_1), (A_2,B_2), \ldots , (A_m,B_m)\}$ be a system of disjoint pairs where for every $i \neq j$ either $A_i \cap B_j \neq \emptyset$ or $A_j \cap B_i \neq \emptyset$. If for every $i$, $|A_i|=a$ and $|B_i| =b$ then
\[|\mathcal{H}| \leq \frac{(a+b)^{a+b}}{a^a b^b}.\]
\end{thm}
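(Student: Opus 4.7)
The plan is to use a probabilistic two-coloring argument, in the spirit of the standard proofs of Bollob\'as-type inequalities. Fix a parameter $p \in (0,1)$ (to be optimized later). Color each element of the ground set $\bigcup_i (A_i \cup B_i)$ independently at random: red with probability $p$ and blue with probability $1-p$. For each $i \in [m]$, let $E_i$ denote the event that every element of $A_i$ is red and every element of $B_i$ is blue. Since $A_i$ and $B_i$ are disjoint by hypothesis, the $|A_i|+|B_i|=a+b$ colorings involved are independent, so
\[ \Pr(E_i) = p^a (1-p)^b. \]

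The key claim is that the events $E_1,\dots,E_m$ are pairwise disjoint. Suppose for contradiction that both $E_i$ and $E_j$ occur for some $i\neq j$. By Tuza's skew hypothesis, at least one of $A_i \cap B_j$ or $A_j \cap B_i$ is nonempty. If $x \in A_i \cap B_j$, then $x$ would have to be red (because $x \in A_i$ and $E_i$ occurred) and blue (because $x \in B_j$ and $E_j$ occurred), a contradiction. The other case is symmetric. Hence the $E_i$ are pairwise disjoint, and
\[ 1 \geq \Pr\!\left( \bigcup_{i=1}^m E_i \right) = \sum_{i=1}^m \Pr(E_i) = m \, p^a (1-p)^b. \]

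Finally, optimize the choice of $p$. The function $p \mapsto p^a(1-p)^b$ is maximized on $(0,1)$ at $p = a/(a+b)$, giving value $a^a b^b / (a+b)^{a+b}$. Substituting yields
\[ m \leq \frac{1}{p^a(1-p)^b} = \frac{(a+b)^{a+b}}{a^a\, b^b}, \]
which is the desired bound.

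There is no really hard step in this plan; the entire proof is driven by the single idea of replacing the uniform random ordering argument (which proves Bollob\'as's Theorem~\ref{bol} and gives the sharper bound $\binom{a+b}{a}$) by an asymmetric two-coloring. The only mildly subtle point is observing that Tuza's weaker, \emph{skew} hypothesis is exactly what is needed for pairwise disjointness of the events $E_i$: a symmetric coloring witnesses both $A_i \cap B_j$ and $A_j \cap B_i$ simultaneously, so having only one of them nonempty suffices to force a color conflict. This is also what explains the weaker bound compared to Bollob\'as, since the ordering argument exploits $\binom{a+b}{a}$ orderings while the coloring argument only exploits a single ``split'' of the ground set.
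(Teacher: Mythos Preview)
Your argument is correct; this is precisely the classical probabilistic proof of Tuza's inequality. Note, however, that the paper does not give its own proof of this statement: Theorem~\ref{tuz} is quoted as a known result with a citation to \cite{tuza}, so there is nothing in the paper to compare against. Your write-up would serve perfectly well as a self-contained proof if one were wanted.
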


A system of pairs $\mathcal{H}$ satisfying the conditions of Theorem \ref{tuz} is called weakly cross-intersecting. Theorem \ref{tuz} is not known to be sharp, the current best lower bound is asymptotically $2 \binom{a+b}{a}$, although the fractional relaxation of Theorem \ref{tuz} is sharp, see \cite{kiraly}.

Suppose that we have a family $\mathcal{F}$ of labelled graphs. Let $\mathcal{G}$ be the family where we replace every labelled graph with the pair $(A,B)$ where $A$ is the set of edges that get label $a$ and $B$ is the set of edges that get label $b$. The family $\mathcal{F}$ contains pairwise compatible labelled graphs if and only if the family $\mathcal{G}$ consists of weakly cross-intersecting pairs.

The original problem of determining $P(n,k)$ can also be reformulated in terms of weakly cross-intersecting set systems. To each permutation $\pi$ we associate a pair $(A_{\pi},B_{\pi})$ as follows. Let $A_{\pi}$ contain the pairs of elements that are neighbors in $\pi$ and $B_{\pi}$ contain the pairs of elements that are separated by $k-2$  other elements of $\pi$. Two permutations are $k$-neighbor separated if and only if their associated pairs of sets are weakly cross intersecting. Observe that $|A_{\pi}|=n-1$ and $|B_{\pi}|=n-k+1$ therefore using $a=b=n$ in Theorem \ref{tuz} we obtain that $P(n,k) \leq 4^k$ for all $k>1$.

\section{Bounds for general fixed $k$} \label{sec:otherk}

In this section we improve the upper bound $P(n,k) \leq 4^n$ provided by Theorem \ref{tuz} for all $k$ (see Section \ref{sec:classic}). We conjecture that for every fixed $k$ the order of magnitude of $P(n,k)$ should be $2^{n-o(n)}$. For even $k$ we have seen that $$P(n,k) \leq 2^{n-O(log(n))}.$$ For odd $k$ we will need the entropy function $H(x_1,x_2,x_3):= \sum_{i=1}^{3}-x_i \log_2{x_i}.$ We will use the fact that the entropy function is related to the asymptotic exponent of a multinomial coefficient. By a straightforward application of the the Stirling formula one can obtain the following well known result.
\begin{claim}For every positive $x_1,x_2,x_3$ with the properties $x_1+x_2+x_3=1$ there are positive polynomials $q_1,q_2$ so that for every  $0 \leq n \in \mathbb{Z}$, \begin{equation} \frac{1}{q_1(n)} 2^{H(x_1,x_2,x_3)n} \leq \binom{n}{x_1n,x_2n,x_3n}  \leq q_2(n) 2^{H(x_1,x_2,x_3)n}. \label{multinomial}
\end{equation}
\end{claim}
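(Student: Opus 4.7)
The plan is to apply the quantitative Stirling bounds: there exist absolute constants $0 < C_1 < C_2$ with $C_1 \sqrt{m}\,(m/e)^m \leq m! \leq C_2 \sqrt{m}\,(m/e)^m$ for every positive integer $m$. First I would treat the case where each $x_i n$ is an integer. Applying the two-sided Stirling bound to the numerator and to each of the three denominator factorials of
$$\binom{n}{x_1 n,\,x_2 n,\,x_3 n}=\frac{n!}{(x_1 n)!\,(x_2 n)!\,(x_3 n)!}$$
and using $\sum_i x_i n = n$, the exponential parts collapse via
$$\frac{(n/e)^n}{\prod_i (x_i n/e)^{x_i n}} \;=\; \prod_i x_i^{-x_i n} \;=\; 2^{n H(x_1,x_2,x_3)},$$
while the square-root prefactors combine into $\sqrt{n}/\prod_i \sqrt{x_i n} = \Theta(n^{-1})$, where the hidden constants depend only on the fixed triple $(x_1,x_2,x_3)$. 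This already produces bounds of the desired shape with $q_1, q_2$ linear in $n$ up to constants.

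Next I would remove the integrality assumption. Since $x_i$ are fixed positive reals, I can replace $x_i n$ by $\lfloor x_i n\rfloor$ (adjusting one coordinate by at most $2$ so that the entries sum to $n$). The function $-x\log_2 x$ has bounded derivative on $[\varepsilon, 1-\varepsilon]$ for any $\varepsilon>0$, so changing each $x_i$ by $O(1/n)$ perturbs $H(x_1,x_2,x_3)$ by $O(1/n)$ and therefore the exponent $n H(x_1,x_2,x_3)$ by only $O(1)$. This discrepancy, together with the constant-factor error in the rounding of the Stirling square roots, is absorbed into the polynomial factors $q_1, q_2$.

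The argument is entirely routine; the only mild obstacle is bookkeeping the non-integer case and verifying that the error terms stay multiplicatively polynomial in $n$. Both issues are handled by the continuity-of-entropy estimate above, so no conceptual difficulty arises beyond the direct application of Stirling's formula.
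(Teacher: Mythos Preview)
Your proposal is correct and follows exactly the approach the paper indicates: the paper itself simply asserts that the claim is obtained ``by a straightforward application of the Stirling formula'' without further detail, and your argument supplies precisely that computation, together with the routine handling of the non-integer case. There is nothing to add.
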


\begin{thm} \label{thm:entropyupper} For $2 \leq k$ fixed, we have
\[P(n,k) \leq  2^{H\left( \frac{k-1}{2k},\frac{k-1}{2k},\frac{2}{2k}\right)n}. \]
\end{thm}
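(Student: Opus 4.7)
The plan is to strengthen the Tuza-type argument of Section~\ref{sec:classic}. Recall from there that each $\pi \in \mathcal{H}$ gives rise to a pair $(A_\pi, B_\pi)$ with $|A_\pi|=n-1$ neighbor edges and $|B_\pi|=n-k+1$ distance-$(k-1)$ edges, and that the family is weakly cross-intersecting. Theorem~\ref{tuz} applied blindly uses only these cardinalities and yields the weaker bound $|\mathcal{H}| \leq \binom{2n-k}{n-1} = 2^{2n - O(\log n)}$. The improvement to the entropy bound must come from the structural fact that $A_\pi$ is the edge set of a Hamiltonian path and $B_\pi$ is determined from it.

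The main device I would use is a canonical $3$-coloring $c_\pi : [n] \to \{A,B,M\}$ attached to each $\pi$, with class sizes $a = b = \lfloor (k-1)n/(2k) \rfloor$ and $m = n - 2a \approx n/k$, where $c_\pi(x)$ depends only on $\pi^{-1}(x) \bmod 2k$: put $x$ in class $M$ when the residue is $0$ or $k$, in class $A$ when it lies in $\{1,\dots,k-1\}$, and in class $B$ when it lies in $\{k+1,\dots,2k-1\}$. By the multinomial estimate (\ref{multinomial}), the number of possible colorings with these class sizes is at most $q(n)\, 2^{H\left(\tfrac{k-1}{2k},\tfrac{k-1}{2k},\tfrac{2}{2k}\right) n}$, so it suffices to prove that each fiber $c^{-1}(c) \cap \mathcal{H}$ has size at most $2^{o(n)}$.

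To bound fibers I would analyze the class multisets of the relevant pairs. A direct check shows that the neighbor-pair multisets arising from the periodic pattern $A^{k-1} M B^{k-1} M$ lie in $\{\{A,A\}, \{A,M\}, \{B,M\}, \{B,B\}\}$, while the distance-$(k-1)$-pair multisets lie in $\{\{A,M\}, \{A,B\}, \{B,M\}\}$; they intersect only in $\{A,M\}$ and $\{B,M\}$. Thus if $\pi_1, \pi_2$ share a coloring and are $k$-neighbor separated, the witnessing pair must have multiset $\{A,M\}$ or $\{B,M\}$, and each permutation supplies only $O(n/k)$ such candidate pairs. Feeding this restriction into the weakly cross-intersecting condition from Section~\ref{sec:classic} should give the desired fiber bound.

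The main obstacle is precisely this fiber bound: the multiset analysis restricts which pairs can witness $k$-neighbor separation within a fiber, but does not immediately rule out an exponential family of witnesses. I expect to resolve this either by enlarging the canonical invariant with a subexponential refinement (for instance, also recording the cyclic position of each $M$-element within its block, costing only $O((n/k)\log k) = o(n)$ bits and rendering the enriched map essentially injective on $\mathcal{H}$) or by averaging over the $2k$ cyclic shifts of the periodic pattern so that the ``bad'' multisets $\{A,M\},\{B,M\}$ are forced to witness a genuine Tuza-style collision. Either route, combined with (\ref{multinomial}), gives $|\mathcal{H}| \leq 2^{o(n)} \binom{n}{a,b,m} \leq 2^{H\left(\tfrac{k-1}{2k},\tfrac{k-1}{2k},\tfrac{2}{2k}\right) n (1+o(1))}$, completing the proof after absorbing the lower-order term into the exponent.
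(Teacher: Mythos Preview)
Your overall strategy---assign to each permutation a $3$-coloring of $[n]$ determined by the residue class of each element's position, and then argue that two permutations with the same coloring cannot be $k$-neighbor separated---is precisely the paper's approach. The difficulty is that with your period $2k$ the argument does not close: as you yourself compute, the neighbor-pair multisets and the distance-$(k-1)$-pair multisets overlap in $\{A,M\}$ and $\{B,M\}$, so permutations in the same fiber \emph{can} be $k$-neighbor separated. Your two proposed repairs do not work. Refining by recording, for each $M$-element, whether its position is $\equiv 0$ or $\equiv k \pmod{2k}$ still leaves overlaps (e.g.\ $\{A,M_k\}$ occurs both as a neighbor pair at positions $(k-1,k)$ and as a distance-$(k-1)$ pair at positions $(1,k)$), so the enriched map is not injective on $\mathcal{H}$; and ``averaging over cyclic shifts'' does not remove the overlap either, since the overlap is present for every shift. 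Without a genuine fiber bound the argument yields nothing beyond the trivial multinomial count times an uncontrolled factor.

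The paper avoids this entirely by taking period $2k-2$ rather than $2k$. Color position $i$ according to $m\equiv i\pmod{2k-2}$, $m\in\{1,\dots,2k-2\}$: blue if $m\in\{1,k\}$, red if $m\in\{2,\dots,k-1\}$, green if $m\in\{k+1,\dots,2k-2\}$. Now the two blue positions in each period are exactly $k-1$ apart, so the distance-$(k-1)$ multisets are only $\{\text{blue},\text{blue}\}$ and $\{\text{red},\text{green}\}$, while the neighbor multisets are $\{\text{red},\text{red}\}$, $\{\text{blue},\text{red}\}$, $\{\text{green},\text{green}\}$, $\{\text{blue},\text{green}\}$. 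These lists are disjoint, so each coloring is realized by at most one member of a pairwise $k$-neighbor separated family, and the bound is immediate from the multinomial estimate with parameters $\bigl(\tfrac{k-2}{2k-2},\tfrac{k-2}{2k-2},\tfrac{2}{2k-2}\bigr)$. (This is what the paper's proof actually produces; the exponent $\tfrac{k-1}{2k}$ in the stated theorem appears to be a typo, and it may well be what led you to try period $2k$.) No fiber analysis, refinement, or averaging is needed.
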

\begin{proof}
For every permutation $\pi$ let us associate a coloring of the ground set with  {\em red}, {\em green} and {\em blue} as follows.  For every $1 \leq i \leq n$ let $m=m(i)$ be such that $m \equiv i \mod{2k-2}$, and $m \in [1,2k-2]$.
\[ \text{The color of }\pi(i) \text{ is}
\left\{ \begin{array}{rl}
\text{red if } & m \in  \{2,3,\ldots, k-1\} \\
\text{green if } & m \in   \{k+1,k+3,\ldots, 2k-2\}\\
\text{blue if } & m \in   \{1, k\}. \\

\end{array} \right. \]

Observe that two permutations which correspond to the same coloring cannot be $k$-neighbor separated by the following reasoning. The colors of two neighboring elements in these permutations can be: red-red, blue-red, green-green, green-blue. But the colors of two elements that are separated by $k-2$ others can only be: red-green, blue-blue. Therefore $P(n,k)$ is at most the number of $3$-colorings of the ground set that we associated to the permutations. If $n$ is divisible by $2k$ then this number is exactly the multinomial coefficient
\[ \binom{n}{\frac{(k-2)n}{2k-2}, \frac{(k-2)n}{2k-2},\frac{2n}{2k-2}} \leq q(n) 2^{H\left( \frac{k-2}{2k-2},\frac{k-2}{2k-2},\frac{2}{2k-2}\right)n} \]

where $q(n)$ is a fixed polynomial. If $n$ is not divisible by $2k$ then this number is at most another polynomial factor away from the multinomial coefficient. All these polynomial factors can be safely ignored since a counterexample to the upper bound $2^{H\left( \frac{k-1}{2k},\frac{k-1}{2k},\frac{2}{2k}\right)n}$ could be blown up to a counterexample with a larger exponent, contradicting the fact that we have an upper bound with only an additional polynomial factor.

\end{proof}

\begin{corr} \label{corr:upper}
\[\limsup_{k \rightarrow \infty}P(k) \leq 2 \]
\end{corr}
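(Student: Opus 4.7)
The statement is a direct consequence of Theorem \ref{thm:entropyupper} together with a continuity property of the entropy function at the boundary, so the plan is essentially a one-line deduction followed by an elementary limit calculation.

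First I would fix an arbitrary $k \geq 3$ and apply Theorem \ref{thm:entropyupper} to get
\[
\sqrt[n]{P(n,k)} \;\leq\; 2^{H\left(\frac{k-1}{2k},\,\frac{k-1}{2k},\,\frac{2}{2k}\right)}
\]
for every $n$. Since the right-hand side does not depend on $n$, taking $n\to\infty$ in the definition $P(k):=\lim_{n\to\infty}\sqrt[n]{P(n,k)}$ yields
\[
P(k) \;\leq\; 2^{H\left(\frac{k-1}{2k},\,\frac{k-1}{2k},\,\frac{2}{2k}\right)}.
\]

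Next I would let $k\to\infty$ on the right-hand side. Writing the entropy explicitly,
\[
H\!\left(\tfrac{k-1}{2k},\tfrac{k-1}{2k},\tfrac{2}{2k}\right)
= -2\cdot\tfrac{k-1}{2k}\log_2\!\tfrac{k-1}{2k} \;-\; \tfrac{2}{2k}\log_2\!\tfrac{2}{2k}.
\]
As $k\to\infty$, the first term tends to $-\log_2(1/2)=1$, and the second term, of the form $\tfrac{1}{k}\log_2 k$, tends to $0$. Hence
\[
\lim_{k\to\infty} H\!\left(\tfrac{k-1}{2k},\tfrac{k-1}{2k},\tfrac{2}{2k}\right) = 1,
\]
which is the standard continuous extension of $H$ to $(1/2,1/2,0)$ using the convention $0\log 0 = 0$.

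Combining the two displays,
\[
\limsup_{k\to\infty} P(k) \;\leq\; \lim_{k\to\infty} 2^{H\left(\frac{k-1}{2k},\,\frac{k-1}{2k},\,\frac{2}{2k}\right)} \;=\; 2^{1} \;=\; 2,
\]
which is precisely the claim. There is no genuine obstacle here: the only thing to check carefully is the boundary continuity of the ternary entropy, and that is immediate from the explicit formula above. The substance of the corollary lies entirely in Theorem \ref{thm:entropyupper}; this result merely extracts its asymptotic consequence.
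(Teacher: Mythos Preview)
Your proof is correct and follows exactly the same approach as the paper: invoke Theorem~\ref{thm:entropyupper} to bound $P(k)$ by $2^{H\left(\frac{k-1}{2k},\frac{k-1}{2k},\frac{2}{2k}\right)}$, then verify that the entropy tends to $1$ as $k\to\infty$. The only difference is cosmetic---you spell out the limit computation explicitly, whereas the paper simply states that it suffices to check the limit equals $1$.
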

\begin{proof} By Theorem \ref{thm:entropyupper} it is enough to check that
 \[\lim_{k \rightarrow \infty} H\left( \frac{k-2}{2k-2},\frac{k-2}{2k-2},\frac{2}{2k-2}\right)=1. \]
\end{proof}

\subsection{Asymptotic lower bound} \label{sec:asym}
\begin{thm}\label{thm:asymlower}
$$\liminf_{k \rightarrow \infty}P(k) \geq 2 $$
\end{thm}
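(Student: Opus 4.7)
The plan is to adapt the construction behind Theorem~\ref{thm:main} to produce $k$-neighbor separated permutations for general large $k$, at the cost of inflating the ground set by only a small factor. Fix large $k$ and set $\ell := \lfloor \log_2(k-1) \rfloor$, $w := 2^\ell$ and $c := k-1-w$, so that $w+1 \leq k \leq 2w$ and $0 \leq c \leq w-1$. Applying Theorem~\ref{thm:main} yields $2^{n-o(n)}$ pairwise $(w+1)$-neighbor separated Hamiltonian paths on $[n]$, each built from a $w$-labelled graph in a family $\mathcal{F}^\ell$ via the $Z$-swapping construction; iterating Claim~\ref{claim:gridnum} $\ell$ times shows that each such labelled graph contains $g = n/3^\ell + o(n)$ grids of width $w$, so that the value invariant $|\mathcal{F}^\ell|\cdot 2^g = 2^{n-o(n)}$ holds.

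The key step is an inflation. For every Hamiltonian path $H$ and every grid $G_i$ of its underlying labelled graph we insert $c$ fresh vertices $y^{(i)}_1,\ldots,y^{(i)}_c$ into the gap between the first and second cell of $G_i$'s top row, respecting the order in which $H$ traverses that row, and using a common labelling of the $y^{(i)}_j$ across all paths. The resulting inflated family lives on a common ground set $[n']$ with $n' = n + cg$. Two paths from the same labelled graph that differ at the $i$-th bit of the $Z$-swap previously had the corner pair $(u_i,(1,1)_i)$ as their $(w+1)$-separation witness; after inflation this pair remains a pair of neighbors in the path entering grid $i$ from the top-left (the insertion is placed after $(1,1)_i$, not between $u_i$ and $(1,1)_i$), while in the path entering from the top-right the $c$ new vertices sit between $(2,1)_i$ and $(1,1)_i$, so the separation grows from $w-1$ to $(w-1)+c = k-2$. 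Pairs of paths from different labelled graphs are handled analogously, using the compatibility edge $dg$ produced by the simple mergings of Figure~\ref{fig:merge}.

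Counting then gives $2^{n-o(n)}$ pairwise $k$-neighbor separated permutations on $n' = n(1 + c/3^\ell + o(1))$ vertices. Since $c \leq w = 2^\ell$, we have $c/3^\ell \leq (2/3)^\ell$, and therefore
$$ P(k) \;\geq\; \lim_{n' \to \infty} \bigl(2^{n-o(n)}\bigr)^{1/n'} \;=\; 2^{1/(1 + c/3^\ell)} \;\geq\; 2^{1/(1 + (2/3)^\ell)}. $$
Letting $k \to \infty$ forces $\ell \to \infty$ and $(2/3)^\ell \to 0$, so $P(k) \to 2$, establishing $\liminf_{k\to\infty} P(k) \geq 2$. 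The hard part is the cross-labelled-graph case of the separation check: one must trace precisely where each compatibility edge $dg$ ends up within a final width-$w$ grid (as a function of the whole sequence of merging operations) in order to confirm that the uniform top-row insertion lifts its induced separation by exactly $c$.
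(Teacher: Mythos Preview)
Your approach is genuinely different from the paper's --- the paper does not inflate the $\mathcal{F}^\ell$ construction at all; instead, for each large $k$ it takes a graph on $k^2$ vertices consisting of disjoint paths of a fixed length $r$, labels every edge freely (giving $2^{(r-1)k^2/r}$ labelled graphs), and then packs each labelled graph into a single $(k+3r)\times(k+3r)$ grid to produce one Hamiltonian path per labelling. This yields $P(n,k+3r+1)\geq 2^{(r-1)n/r-o(n)}$, and letting $r\to\infty$ finishes.

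Your inflation idea handles the within-labelled-graph case correctly, but the cross-labelled-graph case is not merely ``hard to verify'': as stated it is false. Take two compatible $W,W'\in\mathcal{F}^\ell$ with compatibility edge $dg$, labelled $a$ in $W$ and $b$ in $W'$. In any $Z$-swapped path built from $W'$, a $b$-edge joining $(i,j)$ to $(i,j+1)$ has path-distance exactly $w$ regardless of the column $i$ and the bit (the row-by-row traversal has period $w$). Your top-row insertion between $(1,1)$ and $(2,1)$ only perturbs distances that straddle row~$1$; for any $b$-edge with $j\geq 2$ the distance remains $w$, hence the separation remains $w-1$, not $k-2$. Nothing in the iterated merging process forces the edge $dg$ to land between the first two rows of its final grid in $W'$ --- indeed, in the very first Rotating-down (Figure~\ref{fig:merge}) $dg$ sits in the \emph{middle} of a grid of height $2h$, and subsequent mergings can bury it arbitrarily deep. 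So $(d,g)$ fails as a $k$-separation witness, and there is no reason any other pair succeeds.

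The natural repair --- inserting $c$ vertices into \emph{every} row so that every $b$-edge gets its separation lifted to $k-2$ --- costs $c\sum_i h_i = cn/w$ new vertices rather than $cg$. Since $c/w=(k-1)/2^\ell-1$ can be arbitrarily close to~$1$ (take $k=2^{\ell+1}$), this gives $n'\approx 2n$ and hence only $P(k)\geq\sqrt{2}$, not $P(k)\to 2$. In other words, the very feature that makes your vertex count small (inflating just one row per grid) is exactly what breaks the cross-graph separation, and the gap cannot be closed by a more careful bookkeeping of where $dg$ lands.
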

\begin{proof}Recall that for a fixed $k_0$ and a fixed $n_0$ if there are $c^{n_0}$ pairwise $k_0$-neighbor separated Hamiltonian paths on $n_0$ vertices then by a product construction we have the lower bound $c \leq P(k_0)$. We will construct such families for $c$ arbitrarily close to $2$ for every large enough $k_0$. Let $r$ be a fixed integer and $k$ large so that $r$ divides $k$ and $G$ a graph on $k^2$ vertices that is the disjoint union of paths of $r$ vertices. Let $\mathcal{F}$ be the family of labelled graphs that can be formed from $G$ by adding labels from $\{a,b\}$ to every edge of $G$. Clearly
$$|\mathcal{F}|=2^{\frac{r-1}{r}k^2}.$$
For every  $ W \in \mathcal{F}$ we will construct a single Hamiltonian path $H_W$ so that two vertices that are connected by an edge with label $a$ in $W$ are neighbors in $H_W$ and two vertices that are connected by an edge with label $b$ in $W$ are exactly $k$ apart in $H_W$. We do this in a way that the graphs $H_W$ use at most $k^2+o(k^2)$ vertices. For each $W \in \mathcal{F}$ we wish to arrange the components of $W$ and $6kr+9r^2$ additional new vertices into a single $(k+3r) \times (k+3r)$ labelled grid. If we manage to do this, a suitable Hamiltonian path can be obtained adding an isolated vertex and using the Z-swapping construction (we actually obtain two paths but this does not change the order of magnitude). Instead of arranging the labelled paths in $W$ into a single labelled grid, we will fix a ``grid-shape'' of size $(k+3r) \times (k+3r)$ and we aim to fill this completely using the paths of $W$ and the additional isolated vertices (like a puzzle).  

Every labelled graph $W \in \mathcal{F}$ consists of vertex disjoint paths of length $r-1$ with some labelling of the edges from the set $\{a,b\}$. Observe that the number of possible labelling is at most $2^{r-1}$ (actually less than this if $r$ is at least $3$ since the reversed version of a labelling can be considered the same) which is a constant. We say that paths of the same labelling are of the same \textit{type}. As in other proofs, in a labelled grid we represent the edges that are labelled with $a$ with  horizontal edges and the ones that are labelled with $b$ with dashed vertical edges. Observe that using edges of the same type one can completely cover a diagonal strip in the large grid of width $r$, see part $a)$ of  Figure \ref{strip}.

\begin{figure}[htbp]
\begin{center}
\begin{tikzpicture}[scale=0.7]

\node[]() at (5,0){a)};
\begin{scope}[shift={(0,0)}]
\filldraw[black] (0,0) circle (2pt);
\filldraw[black] (1,0) circle (2pt);
\filldraw[black] (2,0) circle (2pt);
\filldraw[black] (2,-1) circle (2pt);
\filldraw[black] (3,-1) circle (2pt);
\filldraw[black] (3,-2) circle (2pt);
\filldraw[black] (3,-3) circle (2pt);
\filldraw[black] (3,-4) circle (2pt);
\filldraw[black] (4,-4) circle (2pt);
\filldraw[black] (5,-4) circle (2pt);
\draw (0,0)--(2,0);
\draw[dashed] (2,0)--(2,-1);
\draw (2,-1)--(3,-1);
\draw[dashed](3,-1)--(3,-4);
\draw (3,-4)--(5,-4);
\end{scope}

\begin{scope}[shift={(-1,-1)}]
\filldraw[black] (0,0) circle (2pt);
\filldraw[black] (1,0) circle (2pt);
\filldraw[black] (2,0) circle (2pt);
\filldraw[black] (2,-1) circle (2pt);
\filldraw[black] (3,-1) circle (2pt);
\filldraw[black] (3,-2) circle (2pt);
\filldraw[black] (3,-3) circle (2pt);
\filldraw[black] (3,-4) circle (2pt);
\filldraw[black] (4,-4) circle (2pt);
\filldraw[black] (5,-4) circle (2pt);
\draw (0,0)--(2,0);
\draw[dashed] (2,0)--(2,-1);
\draw (2,-1)--(3,-1);
\draw[dashed](3,-1)--(3,-4);
\draw (3,-4)--(5,-4);
\end{scope}

\begin{scope}[shift={(-2,-2)}]
\filldraw[black] (0,0) circle (2pt);
\filldraw[black] (1,0) circle (2pt);
\filldraw[black] (2,0) circle (2pt);
\filldraw[black] (2,-1) circle (2pt);
\filldraw[black] (3,-1) circle (2pt);
\filldraw[black] (3,-2) circle (2pt);
\filldraw[black] (3,-3) circle (2pt);
\filldraw[black] (3,-4) circle (2pt);
\filldraw[black] (4,-4) circle (2pt);
\filldraw[black] (5,-4) circle (2pt);
\draw (0,0)--(2,0);
\draw[dashed] (2,0)--(2,-1);
\draw (2,-1)--(3,-1);
\draw[dashed](3,-1)--(3,-4);
\draw (3,-4)--(5,-4);
\end{scope}

\begin{scope}[shift={(-3,-3)}]
\filldraw[black] (0,0) circle (2pt);
\filldraw[black] (1,0) circle (2pt);
\filldraw[black] (2,0) circle (2pt);
\filldraw[black] (2,-1) circle (2pt);
\filldraw[black] (3,-1) circle (2pt);
\filldraw[black] (3,-2) circle (2pt);
\filldraw[black] (3,-3) circle (2pt);
\filldraw[black] (3,-4) circle (2pt);
\filldraw[black] (4,-4) circle (2pt);
\filldraw[black] (5,-4) circle (2pt);
\draw (0,0)--(2,0);
\draw[dashed] (2,0)--(2,-1);
\draw (2,-1)--(3,-1);
\draw[dashed](3,-1)--(3,-4);
\draw (3,-4)--(5,-4);
\end{scope}

\begin{scope}[shift={(-4,-4)}]
\filldraw[black] (0,0) circle (2pt);
\filldraw[black] (1,0) circle (2pt);
\filldraw[black] (2,0) circle (2pt);
\filldraw[black] (2,-1) circle (2pt);
\filldraw[black] (3,-1) circle (2pt);
\filldraw[black] (3,-2) circle (2pt);
\filldraw[black] (3,-3) circle (2pt);
\filldraw[black] (3,-4) circle (2pt);
\filldraw[black] (4,-4) circle (2pt);
\filldraw[black] (5,-4) circle (2pt);
\draw (0,0)--(2,0);
\draw[dashed] (2,0)--(2,-1);
\draw (2,-1)--(3,-1);
\draw[dashed](3,-1)--(3,-4);
\draw (3,-4)--(5,-4);
\end{scope}

\begin{scope}[scale=1.2,shift={(7,-10)}]
\node[]() at (6,10){b)};
\draw (0,3) rectangle (6,9);
\draw (0,8)--(1,9);
\draw (0,7)--(2,9);
\draw (0,6)--(3,9);
\draw (0,5)--(4,9);
\draw (0,4)--(5,9);
\draw (0,3)--(6,9);

\begin{scope}[shift={(0,0)}]
\draw (0,9)--(0.25,9);
\draw (0.25,9)--(0.25,8.6);
\draw (0.25,8.6) -- (0.5,8.6);
\draw (0.5,8.6) -- (0.5,8.5);
\end{scope}

\foreach \x in {0,0.1666,...,1}
{\begin{scope}[shift={(1-\x,-\x)}]
\draw (0,9)--(0.25,9);
\draw (0.25,9)--(0.25,8.6);
\draw (0.25,8.6) -- (0.5,8.6);
\draw (0.5,8.6) -- (0.5,8.5);
\end{scope}}

\foreach \x in {0,0.1666,...,2}
{\begin{scope}[shift={(2-\x,-\x)}]
\draw (0,9)--(0.25,9);
\draw (0.25,9)--(0.25,8.6);
\draw (0.25,8.6) -- (0.5,8.6);
\draw (0.5,8.6) -- (0.5,8.5);
\end{scope}}

\foreach \x in {0,0.1666,...,1}
{\begin{scope}[shift={(3-\x,-\x)}]
\draw (0,9)--(0.25,9);
\draw (0.25,9)--(0.25,8.6);
\draw (0.25,8.6) -- (0.5,8.6);
\draw (0.5,8.6) -- (0.5,8.5);
\end{scope}}

\foreach \x in {1.1666,1.3333,...,3.1666}
{\begin{scope}[shift={(3-\x,-\x)}]
\draw (0,9)--(0,8.6);
\draw (0,8.6)--(0.3,8.6);
\draw (0.3,8.6) -- (0.3,8.5);
\draw (0.3,8.5) -- (0.5,8.5);
\end{scope}}

\end{scope}

\end{tikzpicture}
\end{center}

\caption{a) Shifted versions of paths of the same type fit together. b) We cover large pieces of the $(k+3r) \times (k+3r)$ grid using these strips. When we run out of a given type of paths, we start using another type in the same strip. We will use a few isolated vertices to fill in the gaps at the type change.  }
\label{strip}
\end{figure}
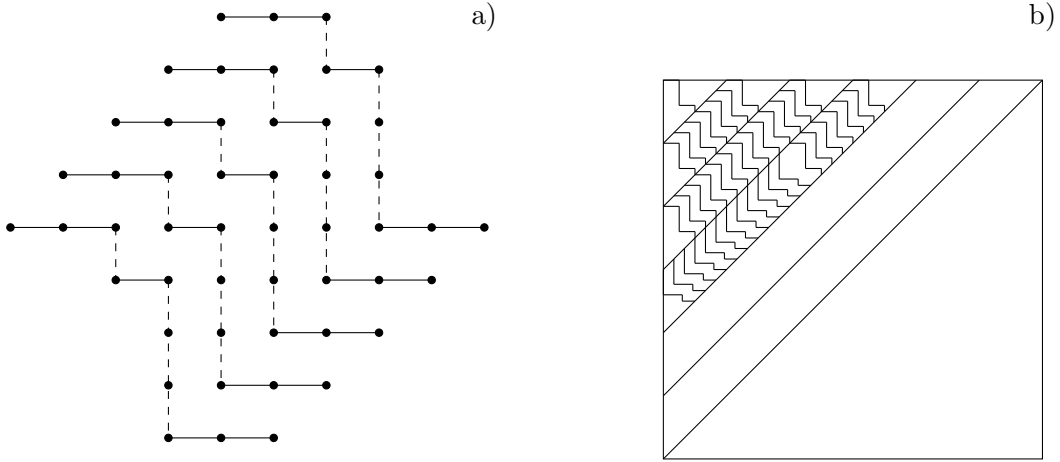

We divide the $(k+3r) \times (k+3r)$ grid into stripes of width $r$ and from top to the bottom, from left to the right we fill those stripes with labelled paths of the same type. If we run out of a type, we start using paths of another type in the same stripe as can be seen in part $b)$ of Figure \ref{strip}.
\begin{claim}For large enough $k$, every labelled path of $W$ fits into the $(k+3r) \times (k+3r)$ grid.
\end{claim}
\begin{proof} Suppose to the contrary that there are still paths from $W$, let us count the number of uncovered vertices in the grid. there are two reasons why a vertex is uncovered: Either we got to the edge of the grid and the path that would cover our vertex sticks out of the grid, or we ran out of a type and started to use another. The number of vertices that are uncovered because the edge of the grid  is too close is at most $4k r$, since the distance of these vertices from the edge of the grid is at most $r$.
The number of vertices that are uncovered because we ran out of a type and started to use another is even smaller. The number of type changes is at most $2^{r-1}$. At a single type change it is easy to see that the number of uncovered vertices is $O(r^2)$. Therefore the number of vertices that are uncovered because of a type change is at most $O(2^r r^2 )$. Thus the number of covered vertices is at least $(k+3r)^2-4kr-O(2^{r-1}r^2)=k^2+2kr-O(2^{r-1}r^2)$. Since $r$ is a constant, for large enough $k$ this quantity is larger than $k^2$, the total number of vertices in $W$ which is a contradiction.
\end{proof}

Since every labelled path in $W$ fits into the $(k+3r) \times (k+3r)$ grid, for every labelled graph $W\in \mathcal{F}$ (recall that labelled graphs in $\mathcal{F}$ have $k^2$ vertices), we can construct a Hamiltonian path on $(k+3r)^2$ vertices so that the new system of Hamiltonian paths is $(k+3r+1)$-neighbor separated. Therefore $$2^{\frac{r-1}{r}k^2} \leq P((k+3r)^2,k+3r+1)$$
for any fixed $r$. For fixed $r$ and $k$ and $n$ tending to infinity a product construction implies $$2^{\frac{r-1}{r}n-o(n)} \leq  P(n,k+3r+1). $$
Since we can choose $r$ arbitrarily large, we have $$2 \leq \liminf_{k \rightarrow \infty}P(k)$$ as claimed.

\end{proof}
Theorem \ref{thm:asymlower} and Corollary \ref{corr:upper} together implies \[\lim_{k \rightarrow \infty} P(k)=2.\]

\section{The behaviour of $P(n,n)$} \label{sec:pnn}

For our lower bound on $P(n,n)$ we will use the following lemma.

\begin{lem} \label{three}
When $n \geq 4$ is even, there are three perfect matchings $M_1,M_2,M_3$ on $n$ vertices such that all three pairwise unions form a Hamiltonian cycle.
\end{lem}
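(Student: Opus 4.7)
My plan is to prove the lemma by induction on $n$. The base case $n=4$ is handled directly by taking the three perfect matchings of $K_4$ on $\{1,2,3,4\}$, namely $M_1=\{12,34\}$, $M_2=\{13,24\}$, $M_3=\{14,23\}$; each pairwise union consists of four edges forming a $4$-cycle, which is Hamiltonian on four vertices.

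For the inductive step $n\to n+2$, I would take matchings $M_1,M_2,M_3$ on $[n]$ with pairwise Hamiltonian unions and splice in two new vertices. First I would write the Hamiltonian cycle $M_1\cup M_2$ as $v_0v_1\cdots v_{n-1}v_0$ with $v_0v_1\in M_1$ (so $v_{n-1}v_0\in M_2$), add new vertices $u,w$, and define
\[
M_1'=M_1\cup\{uw\},\qquad M_2'=(M_2\setminus\{v_{n-1}v_0\})\cup\{v_{n-1}u,wv_0\},
\]
which makes $M_1'\cup M_2'$ the Hamiltonian $(n+2)$-cycle $v_0v_1\cdots v_{n-1}uwv_0$ by construction. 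Then I would pick an edge $e_3\in M_3$, write $e_3=\{x,y\}$ with the labeling chosen according to the rules: (i) if $v_0\in e_3$, set $x=v_0$; (ii) else if $v_{n-1}\in e_3$, set $y=v_{n-1}$; (iii) otherwise choose the labeling so that $v_0$ and $x$ lie on a common arc of $(M_2\cup M_3)\setminus\{v_{n-1}v_0,e_3\}$. Finally set $M_3'=(M_3\setminus\{e_3\})\cup\{ux,wy\}$.

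The verification has three ingredients. Disjointness of the $M_i'$ reduces to the inequalities $x\neq v_{n-1}$ and $y\neq v_0$, which are guaranteed by each of the labeling rules (in particular $\{v_0,v_{n-1}\}\not\subseteq e_3$ since $v_{n-1}v_0\in M_2$ is disjoint from $M_3$). The Hamiltonicity of $M_1'\cup M_3'$ is automatic: removing $e_3$ from the Hamiltonian cycle $M_1\cup M_3$ yields a Hamiltonian path on $[n]$ from $x$ to $y$, which the three new edges $yw,wu,ux$ close into a Hamiltonian $(n+2)$-cycle. The main obstacle, and the reason for the case split in the labeling rules, is the Hamiltonicity of $M_2'\cup M_3'$: in the adjacent cases (i) or (ii), removing $v_{n-1}v_0$ and $e_3$ from $M_2\cup M_3$ leaves one path together with one isolated vertex, and a direct trace shows the four new edges incident to $u,w$ weave everything into a single cycle; in case (iii) the removal produces two paths whose four endpoints are exactly $\{v_0,v_{n-1},x,y\}$, and rule (iii)---always achievable by swapping $x\leftrightarrow y$---forces $u,w$ to bridge those two paths rather than close either one. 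This case analysis for $M_2'\cup M_3'$ is where the care is required.
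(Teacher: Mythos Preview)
Your inductive proof is correct and complete: the base case is immediate, the construction of $M_1',M_2',M_3'$ is well-defined, and your three-case analysis for the Hamiltonicity of $M_2'\cup M_3'$ goes through as described (in cases (i)--(ii) the two removed edges share a vertex, leaving an isolated vertex plus a path, and the four new edges thread them into a single cycle; in case~(iii) your labeling rule is exactly what prevents $u$ and $w$ from each closing off one of the two arcs separately).

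This is, however, a genuinely different argument from the paper's. The paper gives a direct, closed-form construction: it fixes $M_1,M_2$ as the two ``parity'' matchings on the cycle $1,2,\dots,n$, and then writes down $M_3$ explicitly, splitting on whether $n\equiv 2$ or $n\equiv 0\pmod 4$ (in the latter case a small local modification of two edges is needed). Your approach trades the $n\bmod 4$ case split for a local case analysis at the splice point. The paper's version has the advantage of producing explicit matchings one can write down for any $n$ without unwinding a recursion; your version has the advantage of being parity-agnostic and arguably more robust, since the inductive mechanism would adapt if one wanted variants of the lemma. Both proofs are short, and neither dominates the other.
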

\begin{proof}
Let

\begin{align*}
M_1 &:= \{(1,2), (3,4) , \ldots, (n-1,n)\} \\
M_2 &:= \{(n,1), (2,3) , \ldots, (n-2,n-1)\}.
\end{align*}

Note that $M_1 \cup M_2$ is a Hamiltonian cycle. If $n=4k+2$ it is easy to see that the matching
$$M_3 := \{(1,n/2+1),(2,n/2+2), \ldots, (n/2,n) \} $$
completes both $M_1$ and $M_2$ to a Hamiltonian cycle. If $n=4k$ then consider the following matching that consists of the shifts of the edges $\{(1,4),(3,6)\}$:

$$M_3':= \{(1,4),(3,6)  , (5,8),(7,10) , \ldots (1+4l,4+4l),(3+4l,6+4l), \ldots , (n-4,n),(n-2,2) \}. $$
($M_1 \cup M_3'$ is a Hamiltonian cycle but $M_2 \cup M_3'$ is the union of two cycles of length $n/2$.) We define $M_3$ by changing only two edges in $M_3'$:
$$M_3 := (M_3' \setminus \{(1,4),(3,6)\} ) \cup \{(1,3),(4,6)\}.$$
It is again easy to see that $M_3$ completes both $M_1$ and $M_2$ to a Hamiltonian cycle and the proof is complete.
\end{proof}

\begin{remark}
It is conjectured by several authors that for all $n$, the maximal number of perfect matchings on $n$ vertices where every pairwise union is a Hamiltonian cycle is $n-1$, see the survey  \cite{survey}. If such a set of perfect matchings exists, it is called a \textit{perfect $1$-factorization of the complete graph}.
\end{remark}

\begin{proof}[Proof of Theorem \ref{k=n}]
We define a labelled $H$-cycle to be a labelled graph that is a Hamiltonian cycle with a single edge that gets label $b$ and every other edge gets label $a$. Observe that $P(n,n)$ is the maximal number of pairwise compatible labelled $H$-cycles.

\textbf{For the upper bound:} Suppose that we have a family $\mathcal{H}$ of pairwise compatible labelled $H$-cycles on the vertex set $[n]$. We define a new graph $G$ on the vertices $[n]$. Two vertices $x$ and $y$ in $G$ are adjacent if there is a labelled $H$-cycle in $\mathcal{H}$ that contains the edge $\{x,y\}$ with label $b$. Since every labelled $H$-cycle contains only one edge of label $b$ and $\mathcal{H}$ consists of pairwise compatible labelled graphs we have  $$ E(G)=|\mathcal{H}|. $$

\begin{claim}
 $\Delta(G) \leq 3$
\end{claim}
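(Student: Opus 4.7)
The plan is to argue by contradiction: suppose some vertex $v$ has $G$-degree at least $4$, and derive a combinatorial impossibility from the compatibility conditions.

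Assume $v$ has degree at least $4$ in $G$. Then there exist four labelled $H$-cycles $H_1, H_2, H_3, H_4 \in \mathcal{H}$ whose (unique) $b$-edge is incident to $v$. Write the $b$-edge of $H_i$ as $\{v, u_i\}$; since these four edges of $G$ are distinct, the four vertices $u_1, u_2, u_3, u_4$ are pairwise distinct. In the Hamiltonian cycle $H_i$, the vertex $v$ has exactly two neighbors, namely $u_i$ (reached by the $b$-edge) and some other vertex $w_i \neq u_i$ (reached by an $a$-edge).

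Next I would unpack what compatibility means for the pair $H_i, H_j$. Since the only $b$-edge of $H_i$ is $\{v, u_i\}$ and the only $b$-edge of $H_j$ is $\{v, u_j\}$, these two labelled graphs share an edge with differing labels precisely if $\{v, u_i\}$ is an edge of $H_j$, or $\{v, u_j\}$ is an edge of $H_i$. Because $v$'s neighbors in $H_j$ are $\{u_j, w_j\}$ and $u_i \neq u_j$, the first option forces $u_i = w_j$. Symmetrically the second option forces $u_j = w_i$. Hence for every pair $i \neq j$ we need
\[
w_j = u_i \quad \text{or} \quad w_i = u_j.
\]

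Now I would encode this in a directed graph $D$ on vertex set $\{1,2,3,4\}$ with an arc $i \to j$ whenever $w_i = u_j$. The compatibility requirement says that for each of the $\binom{4}{2}=6$ unordered pairs $\{i,j\}$ at least one of the two arcs $i \to j$ or $j \to i$ is present. On the other hand, since $u_1, u_2, u_3, u_4$ are all distinct, the single vertex $w_i$ can equal at most one of them, so each vertex of $D$ has out-degree at most $1$. Consequently $D$ has at most $4$ arcs in total, but it must cover $6$ distinct unordered pairs, a contradiction.

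The main (and really only) obstacle is setting up the correct bookkeeping of the "other" neighbor $w_i$ of $v$ and recognizing that distinctness of the $u_i$'s caps the out-degree in $D$ at one; once that is isolated, the pigeonhole count $4 < 6$ finishes the argument. This shows $\Delta(G) \leq 3$ as claimed.
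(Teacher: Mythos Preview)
Your proof is correct and follows essentially the same approach as the paper's: both argue that each of the four $H$-cycles can ``witness'' compatibility (via its unique $a$-neighbor of $v$) with at most one of the other three, giving at most four compatible pairs against the required six. Your digraph $D$ with the out-degree bound is simply a clean formalization of the paper's sentence ``for each such $H$-cycle $H_1$, there is at most one other among the other three, that has its edge of label $b$ contained in $H_1$ with label $a$.''
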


\begin{proof}
Suppose that we have a vertex in $G$ that has degree at least $4$. Choose four edges that are incident to this vertex. These edges correspond to pairwise compatible labelled $H$-cycles. The edges that get label $b$ in these labelled $H$-cycles form a star. Therefore for each such $H$-cycle $H_1$, there is at most one other among the other three, that has its edge of label $b$ contained in $H_1$ with label $a$. Hence there are at most four compatible pairs among these four labelled $H$-cycles. But for pairwise compatibility there must be at least six, a contradiction.
\end{proof}

Since $\Delta(G) \leq 3$, by the sum of the degrees in $G$, we have $|E(G)| \leq \left \lfloor \frac{3}{2}n \right \rfloor$ and the proof of the upper bound is complete.

\textbf{For the lower bound: } If $n$ is even, by Lemma \ref{three} there are three perfect matchings $M_1, M_2, M_3$ on $[n]$ such that every pairwise union of these is a Hamiltonian cycle. We define three sets of labelled $H$-cycles as follows (all the indices in the following definition are understood modulo $3$). For $i=1,2,3$ let  $E_i $  be a set of labelled $H$-cycles that contain every edge of $M_i$ and $M_{i+1}$. One edge of $M_i$ gets label $b$ and every other edge gets label $a$. Let

$$\mathcal{F}= E_1 \cup E_2 \cup E_3.$$

Clearly $|E_i|=n/2$ for all $i$ thus $ |\mathcal{F}|= \frac{3}{2}n$. It is easy to see that $\mathcal{F}$ consists of pairwise compatible labelled $H$-cycles. 
If $n$ is odd, we proceed similarly as in the even case. We take the three perfect matchings $M_1,M_2$ and $M_3$ on $n-1$ vertices, guaranteed by Lemma \ref{three}. But the pairwise unions of these matchings are not Hamiltonian cycles, but cycles of length $n-1$. We change the matchings  by replacing a single edge in $M_1$ and $M_2$ with an edge that is incident to the $n$-th vertex in such a way that no two matchings share an isolated vertex. After this change we proceed in the same way as in the even case. The only difference is that now we only have $|\mathcal{F}|=\frac{3(n-1)}{2}=\left \lfloor  \frac{3n}{2}\right \rfloor -1 .$

\end{proof}

\begin{remark}
It is not hard to show that when $n$ is odd, there are no three graphs $M_1,M_2,M_3$ with the following properties 
\begin{itemize}
\item $M_1,M_2,M_3$ are edge disjoint
\item Each pairwise union from $\{M_1,M_2,M_3\}$ is a subgraph of a Hamiltonian cycle
\item $|E(M_1)|+|E(M_2)|+|E(M_3|= \left \lfloor  \frac{3n}{2}\right \rfloor. $
\end{itemize}
Therefore the strategy that we used to show that for even $n$, $\frac{3n}{2} \leq P(n,n)$ can not be used when $n$ is odd.  
\end{remark}

\section{Open problems and concluding remarks} \label{opr}

In \cite{original} the authors determined the exact value of $P(n,3)$ and its rough order of magnitude is $2^{n-o(n)}$. The main results of the present paper suggest that the order of magnitude of $P(n,k)$ is also $2^{n-o(n)}$ when $k$ is odd.

In the present paper the methods that lead to the exact value of $P(n,3)$ were applied to the case of $P(n,2^\ell +1)$ and in these special cases we managed to prove Conjecture \ref{conj:main}. Our results in Section \ref{sec:otherk} imply that  $(2-\varepsilon(k))^n \leq P(n,k) \leq (2+\varepsilon(k))^n $ where $\varepsilon(k)$ tends to zero with $k$ tending to infinity. Therefore Conjecture \ref{conj:main} holds asymptotically.

When proving Conjecture \ref{conj:main} for $k=2^\ell +1$, we constructed pairwise compatible labelled graphs with a linear number of labelled grids that got us $2^{n-o(n)}$ permutations. It would be useful to construct a family of $2^{n-o(n)}$ pairwise compatible labelled graphs of any fixed width $k$ with a single labelled grid. If we can do this, it not only implies $P(n,k+1) \approx 2^{n-o(n)}$, but also $P(n,k \ell+1) \approx 2^{n-o(n)}$ for every positive integer $\ell$, as follows. We can divide $n$ vertices into $\ell$ equal parts and take a single-grid construction on each part and take the ``product'' of these constructions. This way our labelled graphs contain exactly $\ell$ labelled grids and we can merge these as the $W_1$ part of Figure \ref{fig:merge} to get a single grid of width $k \ell$. This way we got a set of  $2^{n-o(n)}$ compatible labelled graphs of width $k \ell$. Note that just merging the grids  as the $W_1$ part of Figure \ref{fig:merge} decreases the exponent if we have a linear number of labelled graphs.

Observe that two labelled grids are compatible if and only if their ``transposed'' versions are compatible. The transposed version of a grid with width $2$ and height $n/2$ is one with width $n/2$ and height $2$. The maximal number of pairwise compatible labelled grids of width $n/2$ and height $2$ is just a polynomial factor away from $P(n,n/2+1)$ (it is trivially a lower bound, and for the upper bound consider the middle edge of each permutation and take only those that have the most frequent middle edge, this way we only lose a factor of $\binom{n}{2}$). Therefore for any fixed $k$ we have $$\lim_{n \rightarrow \infty}\sqrt[n]{P(n,n/2+1)} \leq \lim_{n \rightarrow \infty}\sqrt[n]{P(n,2k+1)}.$$
Since $P(n,3) \leq 2^n$, we have $ \lim_{n \rightarrow \infty}\sqrt[n]{P(n,n/2+1)}\leq 2 $.

\begin{question}
What is the limit of $ \sqrt[n]{P(n,n/2+1)}$ when $n$ tends to infinity?
\end{question}

\begin{defi}
The graph $G_3$ is the union of two graphs $G_1,G_2$ on the same vertex set if $V(G_3)=V(G_2)=V(G_1)$ and $E(G_3)= E(G_1) \cup E(G_2)$. We say that two Hamiltonian paths $H_1,H_2$ on the same vertex set $[n]$ are $G$-different if $G$ is a (not necessarily induced) subgraph of $H_1 \cup H_2$. Let $H(n,k)$ be the maximal number of $C_k$-different Hamiltonian paths on $n$ vertices.
\end{defi}

Observe that $H(n,3)=P(n,3)$. If we wish to generalize Theorem \ref{previousmain} in the language of Hamiltonian paths, its natural generalization is to determine $H(n,k)$ for fixed $k$. For odd $k$, our best lower bounds for $H(n,k)$ come from the inequality $P(n,k) \leq H(n,k)$ and the results of the present paper. For odd $k$, the upper bound $H(n,k)\leq 2^{n-o(n)}$ holds by the argument used in Proposition \ref{prop}.

For even $k$, $H(n,k)$ is much larger than $P(n,k)$. In \cite{k=4} it is proven that $n^{\frac{1}{2}n-o(n)} \leq H(n,4)\leq n^{\frac{3}{4}n-o(n)}$. The lower bound is easy: The family of directed Hamiltonian paths of $[n]$ where for every $i$, the $(2i+1)$-th vertex of every Hamiltonian path is the vertex $(2i+1) \in [n]$ satisfies the conditions. It is not hard to generalize this construction to yield $n^{\frac{1}{k}n-o(n)} \leq H(n,2k)$. The upper bound in \cite{k=4} is much more involved, and in a subsequent paper it will be generalized to yield $H(n,k) \leq n^{\left( 1- \frac{1}{ck^2} \right)n-o(n)}$. Therefore our knowledge of $H(n,2k)$ can be summarized as follows
$$n^{\frac{1}{k}n-o(n)} \leq H(n,2k) \leq n^{\left( 1- \frac{1}{ck^2} \right)n-o(n)}. $$
For $P(n,k)$ and $H(n,2k+1)$, if $k$ is getting larger, our bounds are improving, but in the case of $H(n,2k)$ they are getting worse.

\section{Acknowledgement}

We would like to thank Gábor Simonyi and Géza Tóth for their help which improved the quality of this manuscript. 

\begin{bibdiv}
\begin{biblist}

\bib{bollobas}{article}{
  title={On generalized graphs},
  author={B. Bollobás},
    journal={Acta Mathematica Hungarica},
    volume={16},
    number={3},
    pages={447-452},
  date={1965}
}

\bib{infinite}{article}{
  title={Permutation capacities of families of oriented infinite paths},
  author={G. Brightwell},
    author={G. Cohen},
    author={E. Fachini},
    author={M. Fairthorne},
    author={J. Körner},
    author={G. Simonyi},
    author={Á. Tóth},
  journal={SIAM Journal on Discrete Mathematics},
  volume={24},
  number={2},
  pages={441-456},
  year={2010}
} 

\bib{intsurvey}{article}{
  title={Intersecting families of sets and permutations: a survey},
  author={P. Borg},
    journal={arXiv:1106.6144 [math.CO]},
  date={2011}
}
\bib{casey}{article}{
  AUTHOR = {C. Tompkins},
     TITLE = {Personal communication}
}

\bib{additive}{article}{
  AUTHOR = {L.S. Chandran},
  AUTHOR={D. Rajendraprasad},
  AUTHOR={N. Singh},
     TITLE = {On additive combinatorics of permutations of {$\mathbb{Z}_n$}},
   JOURNAL = {Discrete Math. Theor. Comput. Sci.},
    VOLUME = {16},
      YEAR = {2014},
    NUMBER = {2},
     PAGES = {35-40},
}

\bib{cibulka}{article}{
  title={Maximum Size of Reverse-Free Sets of Permutations},
  author={J. Cibulka},
  journal={SIAM Journal on Discrete Mathematics},
  volume={27},
  number={1},
  pages={232-239},
  year={2012}
}

\bib{connector}{article}{
  title={Connector families of graphs},
  author={G. Cohen},
    author={E. Fachini},
      author={J. Körner},
    journal={Graphs and Combinatorics},
  date={2015},
  volume ={30},
    number ={6},
    pages ={1417-1425}
}

\bib{k=4}{article}{
  title={Path separation by short cycles},
 author={G. Cohen},
    author={E. Fachini},
      author={J. Körner},
  journal={Journal of Graph Theory (2016), doi:10.1002/jgt.22050},
}

\bib{tintersect}{article}{
  title={Intersecting Families of Permutations},
  author={D. Ellis},
    author={E. Friedgut},
    author={H. Pilpel},
  journal={Journal of the American Mathematical Society},
  volume={24},
  pages={649-682},
  year={2011}
}

\bib{trintersect}{article}{
  title={Triangle-intersecting families of graphs},
  author={D. Ellis},
   author={Y. Filmus},
    author={E. Friedgut},
  journal={Journal of the European Mathematical Society},
  volume={14},
  number={3},
  pages={841-885},
  year={2012}
}

\bib{frankldeza}{article}{
  title={On the maximum number of permutations with given maximal or minimal distance},
  author={P. Frankl},
    author={M. Deza},
  journal={Journal of Combinatorial Theory, Series A},
  volume={22},
  number={3},
  pages={352-360},
  year={1977}
}
\bib{reversefree}{article}{
  title={On Reverse-Free Codes and Permutations},
  author={Z. Füredi},
    author={I. Kantor},
    author={A. Monti},
    author={B. Sinaimeri},
  journal={SIAM Journal on Discrete Mathematics},
  volume={24},
  number={3},
  pages={964-978},
  year={2010}
}

\bib{kiraly}{article}{
  title={On families of weakly cross-intersecting set-pairs},
  author={Z. Király},
  author={Z. L. Nagy},
  author={D. Pálvölgyi},
  author={M. Visontai},
  journal={Fundamenta Informaticae},
  volume={114},
  number={1-4},
  pages={189-198},
  year={2012}
}

\bib{original}{article}{
  title={Triangle different Hamiltonian paths},
  author={I. Kovács},
    author={D. Soltész},
    journal={arXiv:1608.05237 [math.CO]},
  date={2016}
}

\bib{colliding}{article}{
  title={Pairwise colliding permutations and the capacity of infinite graphs},
  author={J. Körner},
  author={C. Malvenuto},
  journal={SIAM Journal on Discrete Mathematics},
  volume={20},
  number={1},
  pages={203-212},
  year={2006},
  publisher={SIAM}
}

\bib{kms}{article}{
  title={Graph-different permutations},
  author={J. Körner},
    author={C. Malvenuto},
      author={G. Simonyi},
      journal={SIAM Journal on Discrete Mathematics},
     volume={22},
  number={1},
  pages={489-499},
  year={2008},
  publisher={SIAM}
}

\bib{komesi}{article}{
  title={Families of graph-different Hamilton paths},
  author={J. Körner},
    author={S. Messuti},
      author={G. Simonyi},
      journal={SIAM Journal on Discrete Mathematics},
    volume={26},
    number={1},
    pages={321-329},
  date={2012}
}

\bib{locsep}{article}{
  title={Families of locally separated Hamilton paths},
  author={J. Körner},
    author={A. Monti},
    journal={arXiv:1411.3902 [math.CO]},
  date={2015}
}

\bib{degree4}{article}{
  title={Degree-doubling graph families},
  author={J. Körner},
    author={I. Muzi},
    journal={SIAM Journal on Discrete Mathematics},
    volume={27},
    number={3},
    pages={1575--1583},
  date={2013}
}

\bib{KSS}{article}{
  title={On the type of growth for graph-different permutations},
  author={J. K\"orner},
    author={G. Simonyi},
    author={B. Sinaimeri},
  journal={Journal of Combinatorial Theory Ser. A},
  volume={116},
  number={2},
  pages={713-723},
  year={2009}
}

\bib{survey}{article}{
  title={Perfect one-factorizations of the complete graph - a survey.},
  author={E. Seah},
  journal={Bulletin of the Institute of Combinatorics and its Applications },
  volume={1},
  pages={59-70},
  year={1991}
}

\bib{tuza}{article}{
  title={Inequalities for Two Set Systems
with Prescribed Intersections },
  author={Zs. Tuza},
  journal={Graphs and Combinatorics},
  number={3},
  pages={75-80},
  year={1987}
}

\end{biblist}
\end{bibdiv}

\end{document}